\documentclass[12pt, reqno]{amsart}
\usepackage{amsmath, amsthm, amscd, amsfonts, amssymb, graphicx, color}
\usepackage[bookmarksnumbered, colorlinks, plainpages]{hyperref}
\hypersetup{colorlinks=true,linkcolor=red, anchorcolor=green, citecolor=cyan, urlcolor=red, filecolor=magenta, pdftoolbar=true}

\textheight 22.5truecm \textwidth 14.5truecm
\setlength{\oddsidemargin}{0.35in}\setlength{\evensidemargin}{0.35in}

\setlength{\topmargin}{-.5cm}

\newtheorem{theorem}{Theorem}[section]
\newtheorem{lemma}[theorem]{Lemma}

\newtheorem{corollary}[theorem]{Corollary}
\theoremstyle{definition}
\newtheorem{definition}[theorem]{Definition}
\newtheorem{example}[theorem]{Example}

\theoremstyle{remark}

\numberwithin{equation}{section}

\begin{document}
	\setcounter{page}{1}
	
	\title[C-CLASS FUNCTIONS AND FIXED POINTS FOR WEAKLY CONTRACTIVE MAPPINGS]{C-CLASS FUNCTIONS AND FIXED POINTS FOR WEAKLY CONTRACTIVE MAPPINGS IN
		RECTANGULAR b-METRIC SPACES}
	
	\author[M. Rossafi, A. Kari, A. H. Ansari]{Mohamed
		Rossafi$^{1*}$, Abdelkarim Kari$^{2}$ \MakeLowercase{and} Arsalan Hojjat Ansari$^{3}$}

		\address{$^{1}$LaSMA Laboratory Department of Mathematics Faculty of Sciences Dhar El Mahraz, University Sidi Mohamed Ben Abdellah, Fez, Morocco}
	\email{\textcolor[rgb]{0.00,0.00,0.84}{rossafimohamed@gmail.com}}
	
	\address{$^{2}$ Laboratory of Analysis, Modeling and Simulation Faculty of Sciences Ben M'Sik, Hassan II University, B.P. 7955
		Casablanca, Morocco}
	\email{\textcolor[rgb]{0.00,0.00,0.84}{abdkrimkariprofes@gmail.com}}
	
	\address{$^{3}$Department of Mathematics and Applied Mathematics, Sefako
		Makgatho Health Sciences University, Ga-Rankuwa, Pretoria, Medunsa-0204,
		South Africa}
	\email{\textcolor[rgb]{0.00,0.00,0.84}{mathanalsisamir4@gmail.com}}
	
	\subjclass[2020]{Primary: 42C15; Secondary: 47A05}
	
	\keywords{Continous frames, Continous K-g-frames, $C^{\ast}$-algebras, Hilbert $C^{\ast}$-modules.}
	
	\date{
		\newline \indent $^{*}$Corresponding author}
	
	\begin{abstract}
In this paper, inspired by the concept of generalized weakly contractive mappings in metric spaces, we introduce C-Class function and fixed point theory for weakly contractive in the setting of rectangular $b$-metric spaces and established existence and uniqueness of fixed points for the mappings introduced.
		\end{abstract} \maketitle

\section{Introduction}

It is well known that the Banach contraction principle \cite{BA} is a
fundamental result in the fixed point theory, several authors have obtained
many interesting extensions and generalizations \cite{BRO, KAN, KRMA, RE}.
The well known metric spaces have been generalized metric spaces introduced
by Branciari \cite{BRA}. Various fixed point results were established on
such spaces \cite{AZ, JS, JSA, KIR, LKS, SM}.

Recently, George \textit{et al. } \cite{RG} announced the notion of $b$
-rectangular metric space and formulated some fixed point theorems in the $b$
-rectangular metric space. Many authors initiated and studied many existing
fixed point theorems in such spaces \cite{DINI, DIN, c, KARO, KAR, lhl, a,
	b, d, RO, SK, SKS, ysg}.

Weak contraction principle is a generalization of the Banach contraction
principle which was first given by Alber \textit{et al.} in Hilbert spaces 
\cite{YA}. Coudhury \textit{et al.} \cite{CHOD} proved some fixed point
results for weakly contractive mappings in complete metric spaces. Several
authors have studied weak contraction mapping in complete metric spaces \cite%
{CHI, KHAN, mb, nia, prs, rsa, ROD, ZH}.

Very recently, Cho \cite{CHO} introduced a special weakly contractive
mappings called generalized weakly contractive mappings and proved some
fixed point results for such mappings in complete metric spaces.

In this work, we introduce a new notion of C-Class function for weakly
contractive mappings and provide some fixed point results for such mappings
in complete $b$-rectangular metric spaces. We also present some special
examples of generalized weakly contractive mappings on $b$-rectangular
metric spaces. Also, we derive some useful corollaries.

\section{Preliminaries}

In the following, we collect background information needed in the
presentation of our results.

\begin{definition}
	\cite{RG} Let $X$ be a nonempty set, $s\geq 1$ be a given real number and $d 
	$: $X\times X\rightarrow \left[ 0,+\infty \right[ $ be a function such that
	for all $x,y$ $\in X$ and all distinct points $u,v\in X,$
	
	\begin{itemize}
		\item[1.] $d\left(x, y\right) =0$ if only if $x=y;$
		
		\item[2.] $d\left(x, y\right) =d\left(y, x\right);$
		
		\item[3.] $d\left(x, y\right) \leq $ $s\left[d\left( x,u\right)+d\left(u,
		v\right) +d\left(v, y\right) \right] $ $\left(b-rectangular\ inequality
		\right) .$
	\end{itemize}
	
	Then $\left(X, d\right) $ is called a $b$-rectangular metric space.
\end{definition}

\begin{example}
	\cite{KARO}. Let $X=A\cup B $, where $A=\lbrace \frac{1}{n}:n\in\lbrace
	2,3,4,5,6,7\rbrace \rbrace $ and $B=\left[1,2 \right] $. Define $d:X\times
	X\rightarrow \left[0,+\infty \right[ $ as follows: 
	\begin{equation*}
		\left\lbrace \begin{aligned} d(x, y) &=d(y, x)\ for \ all \  x,y\in X;\\
			d(x, y) &=0\Leftrightarrow y= x\\ \end{aligned} \right.
	\end{equation*}
	and 
	\begin{equation*}
		\left\lbrace \begin{aligned} d\left( \frac{1}{2},\frac{1}{3}\right) =d\left(
			\frac{1}{4},\frac{1}{5}\right) =d\left( \frac{1}{6},\frac{1}{7}\right)
			&=0,05\\ d\left( \frac{1}{2},\frac{1}{4}\right) =d\left(
			\frac{1}{3},\frac{1}{7}\right) =d\left( \frac{1}{5},\frac{1}{6}\right)
			&=0,08\\ d\left( \frac{1}{2},\frac{1}{6}\right) =d\left(
			\frac{1}{3},\frac{1}{4}\right) =d\left( \frac{1}{5},\frac{1}{7}\right)
			&=0,4\\ d\left( \frac{1}{2},\frac{1}{5}\right) =d\left(
			\frac{1}{3},\frac{1}{6}\right) =d\left( \frac{1}{4},\frac{1}{7}\right)
			&=0,24\\ d\left( \frac{1}{2},\frac{1}{7}\right) =d\left(
			\frac{1}{3},\frac{1}{5}\right) =d\left( \frac{1}{4},\frac{1}{6}\right)
			&=0,15\\ d\left( x,y\right) =\left( \vert x-y\vert\right) ^{2} \ otherwise.
		\end{aligned} \right.
	\end{equation*}
	Then $(X,d) $ is a $b$-rectangular metric space with coefficient $s=3$.
\end{example}

\begin{lemma}
	\label{2.3} \cite{RO} Let $\left(X, d\right) $ be a $b$-rectangular metric
	space.
	
	\begin{itemize}
		\item[(a)] Suppose that sequences $\lbrace x_{n}\rbrace $ and $\lbrace
		y_{n}\rbrace $ in $X$ are such that $x_{n}\rightarrow x$ and $%
		y_{n}\rightarrow y$ as $n\rightarrow +\infty ,$ with $x\neq y,$ $x_{n}\neq x$
		and $y_{n}\neq y$ for all $n\in \mathbb{N}.$ Then we have 
		\begin{equation*}
			\frac{1}{s}d\left( x,y\right) \leq \lim_{n\rightarrow +\infty }\inf
			d\left(x_{n},y_{n}\right) \leq \lim_{n\rightarrow +\infty }\sup
			d\left(x_{n},y_{n}\right) \leq sd\left( x,y\right). \\
		\end{equation*}
		
		\item[(b)] If $y\in X$ and $\lbrace x_{n}\rbrace $ is a Cauchy sequence in $%
		X $ with $x_{n}\neq x_{m}$ for any $m,n\in \mathbb{N},$ $m\neq n,$
		converging to $x\neq y,$ then 
		\begin{equation*}
			\frac{1}{s}d\left( x,y\right) \leq \lim_{n\rightarrow +\infty }\inf
			d\left(x_{n},y\right) \leq \lim_{n\rightarrow +\infty }\sup d\left(
			x_{n},y\right)\leq sd\left( x,y\right),
		\end{equation*}
		for all $x\in X.$\newline
	\end{itemize}
\end{lemma}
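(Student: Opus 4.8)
The plan is to derive both two-sided estimates from two applications of the $b$-rectangular inequality (item~3 of the definition): one arrangement isolates $d(x,y)$ on the left and produces the lower bound, the other isolates $d(x_n,y_n)$ (resp.\ $d(x_n,y)$) on the left and produces the upper bound. The central inequality $\liminf\le\limsup$ is automatic, so all the work lies in the two outer estimates.

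For part~(a), for the upper bound I would take the two interior points of the $b$-rectangular inequality to be $x$ and $y$, obtaining
\[
d(x_n,y_n)\le s\bigl[d(x_n,x)+d(x,y)+d(y,y_n)\bigr].
\]
Passing to the $\limsup$ and using $d(x_n,x)\to0$ and $d(y_n,y)\to0$ annihilates the first and third summands and leaves $\limsup_{n}d(x_n,y_n)\le s\,d(x,y)$. For the lower bound I would instead insert $x_n$ and $y_n$ as interior points,
\[
d(x,y)\le s\bigl[d(x,x_n)+d(x_n,y_n)+d(y_n,y)\bigr],
\]
divide by $s$, and take $\liminf$; the outer two terms vanish and yield $\tfrac1s\,d(x,y)\le\liminf_{n}d(x_n,y_n)$.

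Part~(b) is handled in the same spirit, but since $y$ is now a fixed point only one running index is available, so I would feed two consecutive terms of the Cauchy sequence into the interior slots. For the upper bound,
\[
d(x_n,y)\le s\bigl[d(x_n,x_{n+1})+d(x_{n+1},x)+d(x,y)\bigr],
\]
where $d(x_n,x_{n+1})\to0$ by the Cauchy condition and $d(x_{n+1},x)\to0$ by convergence, giving $\limsup_{n}d(x_n,y)\le s\,d(x,y)$; symmetrically,
\[
d(x,y)\le s\bigl[d(x,x_{n+1})+d(x_{n+1},x_n)+d(x_n,y)\bigr]
\]
yields $\tfrac1s\,d(x,y)\le\liminf_{n}d(x_n,y)$ after dividing by $s$ and taking $\liminf$.

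The step I expect to demand the most care is the justification that the $b$-rectangular inequality is actually applicable at each stage, for by the definition it is available only when the two interior points are distinct (and different from the endpoints). This is exactly the purpose of the standing hypotheses: $x_n\neq x$, $y_n\neq y$, and $x\neq y$ in~(a), and the pairwise distinctness $x_n\neq x_m$ in~(b). Part~(b) is the cleaner case, since the consecutive terms $x_n,x_{n+1}$ are distinct by hypothesis and the finitely many indices where an interior point might coincide with an endpoint are irrelevant to $\liminf$ and $\limsup$. The genuinely delicate requirement is the distinctness $x_n\neq y_n$ needed in the lower-bound application of~(a); here one leans on $x\neq y$ together with the convergence $x_n\to x$, $y_n\to y$ to exclude the interior points collapsing onto one another, so that the two boundary estimates survive the passage to the limit.
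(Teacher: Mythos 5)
The paper itself never proves Lemma \ref{2.3}: it is imported verbatim from \cite{RO}, so the only available comparison is with that source's argument, which your proposal essentially reproduces --- two applications of the $b$-rectangular inequality, with $\{x,y\}$ as interior points for the upper bound and $\{x_n,y_n\}$ (resp. $\{x_n,x_{n+1}\}$) for the lower one. Your bookkeeping of coincidences between an interior point and an endpoint is also correct: if, say, $y_n=x$ for infinitely many $n$, then $d(x,y)=\lim_n d(y_n,y)=0$, contradicting $x\neq y$, so such coincidences occur only finitely often and are harmless for $\liminf$/$\limsup$. Likewise part (b) --- which, incidentally, is the only part this paper actually invokes (with the Cauchy sequence $\{Tx_n\}$ and the point $Tz$) --- is fully correct as you argue it.

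The genuine gap is exactly at the step you flagged: excluding $x_n=y_n$ in the lower bound of (a). Your proposed justification, that $x\neq y$ together with $x_n\to x$ and $y_n\to y$ prevents the interior points from collapsing, is a uniqueness-of-limits argument, and limits are \emph{not} unique in $b$-rectangular metric spaces: convergence there does not induce a Hausdorff topology. Concretely, let $X=\{p,q\}\cup\{z_n:n\in\mathbb{N}\}$ with the $z_n$ pairwise distinct, and set $d(p,q)=1$, $d(z_n,p)=d(z_n,q)=\frac{1}{n}$, and $d(z_n,z_m)=1$ for $n\neq m$. With the standard reading of the $b$-rectangular inequality (interior points ranging over $X\setminus\{x,y\}$, which is also the reading your proof uses), one checks directly that this is a rectangular metric space with $s=1$, and that $z_n\to p$ while simultaneously $z_n\to q$. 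Taking $x_n=y_n=z_n$, $x=p$, $y=q$ satisfies every stated hypothesis of (a) --- $x\neq y$, $x_n\neq x$, $y_n\neq y$ --- yet $\liminf_n d(x_n,y_n)=0<\frac{1}{s}\,d(p,q)$. So the collapse cannot be excluded by any argument: the lower estimate in (a) is simply false under the hypotheses as written, and becomes true only if one adds the assumption $x_n\neq y_n$ for all large $n$, which is the condition under which \cite{RO} tacitly applies the inequality and the form in which the lemma is used in practice. Your proof of that case is fine; what cannot be done is deriving the missing hypothesis from the stated ones.
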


\begin{lemma}
	\label{2?4} \cite{KARO} Let $\left(X, d\right) $ be a $b$-rectangular metric
	space and $\lbrace x_n \rbrace $ be a sequence in $X $ such that 
	\begin{equation}
		\lim_{n\rightarrow +\infty } d\left(x_{n},x_{n+1}\right)= \lim_{n\rightarrow
			+\infty }\ d\left(x_{n},x_{n+2}\right)=0.
	\end{equation}
	If $\lbrace x_n \rbrace $ is not a Cauchy sequence, then there exist $%
	\varepsilon >0 $ and two sequences $\lbrace m(k) \rbrace $ and $\lbrace n(k)
	\rbrace $ of positive integers such that \newline
	\begin{equation*}
		\varepsilon \leq \lim_{k\rightarrow +\infty }\inf d\left( x_{m_{\left(
				k\right) }},x_{n_{\left( k\right)}}\right) \leq \lim_{k\rightarrow +\infty
		}\sup d\left( x_{m_{\left( k\right) }},x_{n_{\left( k\right)}}\right)\leq
		s\varepsilon ,
	\end{equation*}
	\begin{equation*}
		\varepsilon \leq \lim_{k\rightarrow +\infty }\inf d\left( x_{n_{\left(
				k\right) }},x_{m_{\left( k\right)+1}}\right) \leq \lim_{k\rightarrow +\infty
		}\sup d\left( x_{n_{\left( k\right) }},x_{m_{\left( k\right)+1}}\right)\leq
		s\varepsilon ,
	\end{equation*}
	\begin{equation*}
		\varepsilon \leq \lim_{k\rightarrow +\infty }\inf d\left( x_{m_{\left(
				k\right) }},x_{n_{\left( k\right)+1}}\right) \leq \lim_{k\rightarrow +\infty
		}\sup d\left( x_{m_{\left( k\right) }},x_{n_{\left( k\right)+1}}\right)\leq
		s\varepsilon ,
	\end{equation*}
	\begin{equation*}
		\frac{\varepsilon}{s} \leq \lim_{k\rightarrow +\infty }\inf d\left(
		x_{m_{\left( k\right)+1 }},x_{n_{\left( k\right)+1}}\right) \leq
		\lim_{k\rightarrow +\infty }\sup d\left( x_{m_{\left( k\right)+1
		}},x_{n_{\left( k\right)+1}}\right)\leq s^{2}\varepsilon .
	\end{equation*}
\end{lemma}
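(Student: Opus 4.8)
The plan is to run the classical ``not Cauchy'' extraction argument, but to carry it out entirely through the $b$-rectangular inequality (item 3 of the definition), since no ordinary triangle inequality is available here. First I would use the negation of the Cauchy property directly: there is an $\varepsilon>0$ such that for every index $k$ one can find a pair with $n(k)<m(k)$, both exceeding $k$, and $d(x_{m(k)},x_{n(k)})\ge \varepsilon$. To obtain usable \emph{upper} bounds I would take $m(k)$ to be the smallest integer exceeding $n(k)$ for which this holds. Minimality then forces $d(x_j,x_{n(k)})<\varepsilon$ for every $j$ with $n(k)<j<m(k)$, and in particular for both $j=m(k)-1$ and $j=m(k)-2$. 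This twofold consequence of minimality is exactly what a four-point inequality consumes, and it is the structural reason the hypothesis carries the two separate limits $d(x_n,x_{n+1})\to 0$ and $d(x_n,x_{n+2})\to 0$: one controls consecutive terms, the other controls ``skip-one'' terms, and both flavours of error appear when two intermediate points are inserted.

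With $\varepsilon$, $m(k)$, $n(k)$ fixed, each of the four displayed lines is then obtained by one or two applications of the $b$-rectangular inequality, always inserting two consecutive intermediate terms so that the leftover ``error'' distances are of the form $d(x_j,x_{j+1})$ or $d(x_j,x_{j+2})$, which vanish as $k\to\infty$. For the first line I would estimate $d(x_{m(k)},x_{n(k)})\le s[\,d(x_{m(k)},x_{m(k)-1})+d(x_{m(k)-1},x_{m(k)-2})+d(x_{m(k)-2},x_{n(k)})\,]$: the first two summands tend to $0$ and the third is $<\varepsilon$ by minimality, which gives $\limsup\le s\varepsilon$, while $\liminf\ge\varepsilon$ is immediate from the construction. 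For the second and third lines I would pivot from $d(x_{m(k)},x_{n(k)})$ to $d(x_{n(k)},x_{m(k)+1})$ and to $d(x_{m(k)},x_{n(k)+1})$ by inserting the appropriate consecutive pair (for instance $x_{m(k)-1},x_{m(k)}$ on one side, or $x_{m(k)+1},x_{m(k)+2}$ when estimating in the reverse direction). For the last line I would sandwich $d(x_{m(k)+1},x_{n(k)+1})$ against $d(x_{m(k)},x_{n(k)})$ from both sides, using the pair $x_{m(k)},x_{n(k)}$ to bound it above and the pair $x_{m(k)+1},x_{n(k)+1}$ to bound $d(x_{m(k)},x_{n(k)})$ in terms of it; each such step contributes one factor $s$, so feeding in the already-established bounds $\varepsilon\le\liminf$ and $\limsup\le s\varepsilon$ for $d(x_{m(k)},x_{n(k)})$ produces precisely the asymmetric constants $\tfrac{\varepsilon}{s}$ and $s^{2}\varepsilon$ in the fourth line.

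The step I expect to be the main obstacle is the admissibility of the rectangular inequality at each application, together with correctly lining up the decompositions. Unlike in a metric or $b$-metric space, the inequality only holds when the two inserted points $u,v$ are distinct (from each other and from the endpoints), so I would first argue that one may assume the terms $x_n$ are pairwise distinct for large indices---otherwise a coincidence forces a vanishing distance that trivially satisfies the bound---and only then apply the four-point estimate to the chosen consecutive pairs. The real bookkeeping is choosing, in each of the four cases, which two consecutive terms to insert so that exactly one surviving summand is the controlled quantity (either the one bounded by $\varepsilon$ via minimality, or a previously established $\liminf/\limsup$), with all remaining summands of consecutive or skip-one type. Once the decompositions are matched to the two hypotheses in this way, passing to $\liminf$ and $\limsup$ as $k\to\infty$ is routine and yields the four claimed sandwiches.
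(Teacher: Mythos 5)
First, a point of reference: the paper itself gives no proof of this lemma --- it is quoted with the citation \cite{KARO} --- so your attempt can only be measured against the standard extraction argument used in that source and its predecessors (e.g.\ \cite{RO}). Your overall strategy is exactly that standard one: extract $\varepsilon>0$ and indices $n(k)<m(k)$ with $m(k)$ minimal, use minimality to get $d\left(x_{j},x_{n(k)}\right)<\varepsilon$ for $n(k)<j<m(k)$, and then apply the $b$-rectangular inequality with suitable insertions. This correctly yields the first sandwich $\left[\varepsilon,s\varepsilon\right]$, the fourth sandwich $\left[\varepsilon/s,s^{2}\varepsilon\right]$, and the upper bounds $s\varepsilon$ in the second and third lines; your flagging of the distinctness restriction on the inserted points, and the implicit need for $m(k)\geq n(k)+3$ for large $k$ (so that $x_{m(k)-1},x_{m(k)-2}$ are admissible and controlled by minimality), are also essentially right, though stated loosely.

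The genuine gap is in the \emph{lower} bounds of the second and third lines. The lemma asserts $\varepsilon\leq\liminf_{k}d\left(x_{n(k)},x_{m(k)+1}\right)$ and $\varepsilon\leq\liminf_{k}d\left(x_{m(k)},x_{n(k)+1}\right)$, but your \emph{reverse direction} pivot cannot produce the constant $\varepsilon$. Starting from $\varepsilon\leq d\left(x_{m(k)},x_{n(k)}\right)$ and inserting, say, $x_{m(k)+2},x_{m(k)+1}$, the $b$-rectangular inequality gives $\varepsilon\leq s\left[d\left(x_{m(k)},x_{m(k)+2}\right)+d\left(x_{m(k)+2},x_{m(k)+1}\right)+d\left(x_{m(k)+1},x_{n(k)}\right)\right]$, and since the first two terms vanish this yields only $\liminf_{k}d\left(x_{m(k)+1},x_{n(k)}\right)\geq\varepsilon/s$: the rectangular inequality transfers a lower bound only at the cost of a factor $s$, which is precisely the factor-counting you yourself invoke to explain the constants $\varepsilon/s$ and $s^{2}\varepsilon$ in the fourth line. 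So, as written, your argument proves the second and third sandwiches with $\varepsilon/s$ on the left, which is strictly weaker than the statement whenever $s>1$, and this cannot be repaired by re-choosing the inserted pair --- any single application of the inequality from $d\left(x_{m(k)},x_{n(k)}\right)\geq\varepsilon$ incurs the factor $s$. Matching the stated constant $\varepsilon$ would require a genuinely different idea (and, for what it is worth, careful versions of this extraction lemma in the $b$-rectangular literature also obtain only $\varepsilon/s$ there; note too that the paper's use of the lemma in Theorem \ref{thm3.2} needs only the upper bound $s\varepsilon$ of the third line and the lower bound $\varepsilon/s$ of the fourth, so the weaker statement you actually prove would suffice downstream --- but as a proof of the lemma as stated, this is a gap you should name rather than gloss over).
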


\begin{definition}
	A function $f : X \rightarrow\mathbb{R^{+}} $, where $X $ is a $b$
	-rectangular metric space, is called lower semicontinuous if for all $x \in
	X $ and $x _{n }\in X$ with $lim_{n\rightarrow +\infty } x _{n }=x $, we
	have 
	\begin{equation*}
		f\left(x \right)\leq \liminf_{n\rightarrow +\infty }f\left( x _{n } \right).
	\end{equation*}
\end{definition}

\begin{definition}
	A function $g : X \rightarrow\mathbb{R^{+}} $, where $X $ is a $b$
	-rectangular metric space, is called is a right upper semicontinuous
	function if for all $x \in X $ and $x _{n }\in X$ with $lim_{n\rightarrow
		+\infty } x _{n }=x $, we have 
	\begin{equation*}
		g\left(x \right)\geq \limsup_{n\rightarrow +\infty }f\left( x _{n } \right).
	\end{equation*}
\end{definition}

\begin{definition}
	\cite{KHAN} A function $\psi:\left[0,+\infty \right[\rightarrow \left[
	0,+\infty \right[ $ is said to be an altering distance function if it
	satisfies the following conditions:
	
	\item[(a)] is continuous and nondecreasing;
	
	\item[(b)] $\psi (t)=0$ if and only if $t=0$.
\end{definition}

We denote the set of altering distance functions by $\Phi .$

\begin{example}
	Define $\psi _{1}$; $\psi _{2}$; $\psi _{3}$: $\left[ 0,+\infty \right[
	\rightarrow \left[ 0,+\infty \right[ $ by $\psi _{1}(t)=t$, $\psi _{t}(t)=2t$
	and $\psi _{3}(t)=t^{2}$. Then they are altering distance functions.
\end{example}

\begin{definition}
	\cite{CHO} Let $X$ be a complete metric space with metric $d$, and $%
	T:X\rightarrow X$. Also let $\varphi :X\rightarrow \mathbb{R^{+}}$ be a
	lower semicontinuous function. Then $T$ is called a generalized weakly
	contractive mapping if it satisfies the following condition: 
	\begin{equation*}
		\psi \left( d\left( Tx,Ty\right) +\varphi \left( Tx\right) +\varphi \left(
		Ty\right) \right) \leq \psi \left( m\left( x,y,d,T,\varphi \right) \right)
		-\phi \left( l\left( x,y,d,T,\varphi \right) \right) ,
	\end{equation*}
	where 
	\begin{equation*}
		m(x,y,d,T,\varphi )=\max \{d(x,y)+\varphi (x)+\varphi (y),d(x,Tx)+\varphi
		(x)+\varphi (Tx),d(y,Ty)+\varphi (y)+\varphi (Ty),
	\end{equation*}
	\begin{equation*}
		\frac{1}{2}\{d(x,Ty)+\varphi (x)+\varphi (Ty)+d(y,Tx)+\varphi (Tx)+\varphi
		(y)\}\}
	\end{equation*}
	and 
	\begin{equation*}
		l(x,y,d,T,\varphi )=\max \{d(x,y)+\varphi (x)+\varphi (y),d(y,Ty)+\varphi
		(y)+\varphi (Ty)\}
	\end{equation*}
	for all $x,y\in X$, where $\psi :\mathbb{R^{+}}\rightarrow \mathbb{R^{+}}$
	is continuous with $\psi (t)=0$ if and only if $t=0$ and $\phi :\mathbb{%
		R^{+} }\rightarrow \mathbb{R^{+}}$ is a lower semicontinuous function with $%
	\phi (t)=0$ if and only if $t=0$.
\end{definition}

\begin{theorem}
	\cite{CHO} Let $X$ be complete. If $T$ is a generalized weakly contractive
	mapping, then there exists a unique $z\in X$ such that $z=Tz$ and $\varphi
	(z)=0$.
\end{theorem}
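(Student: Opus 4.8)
The plan is to run the Picard iteration and force a fixed point out of the weak--contraction inequality. Fix $x_{0}\in X$ and set $x_{n+1}=Tx_{n}$, and write $a_{n}=d(x_{n},x_{n+1})+\varphi (x_{n})+\varphi (x_{n+1})$. Applying the defining inequality with $(x,y)=(x_{n-1},x_{n})$, the left-hand side equals $\psi (a_{n})$ since $Tx_{n-1}=x_{n}$ and $Tx_{n}=x_{n+1}$. A direct computation shows the first three entries of $m(x_{n-1},x_{n},d,T,\varphi )$ are $a_{n-1},a_{n-1},a_{n}$, while its fourth entry is bounded by $\tfrac{1}{2}(a_{n-1}+a_{n})$ by the triangle inequality $d(x_{n-1},x_{n+1})\le d(x_{n-1},x_{n})+d(x_{n},x_{n+1})$; hence $m=l=\max \{a_{n-1},a_{n}\}$. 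The inequality becomes $\psi (a_{n})\le \psi (\max \{a_{n-1},a_{n}\})-\phi (\max \{a_{n-1},a_{n}\})$, and if $a_{n}>a_{n-1}$ this reads $\psi (a_{n})\le \psi (a_{n})-\phi (a_{n})$, forcing $\phi (a_{n})=0$, i.e. $a_{n}=0$, a contradiction. So $\{a_{n}\}$ is nonincreasing and converges to some $r\ge 0$.

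Next I would show $r=0$. Since $a_{n}\le a_{n-1}$, the inequality reads $\psi (a_{n})\le \psi (a_{n-1})-\phi (a_{n-1})$; letting $n\to \infty$ and using continuity of $\psi$ together with lower semicontinuity of $\phi$ gives $\psi (r)\le \psi (r)-\phi (r)$, so $\phi (r)=0$ and $r=0$. In particular $d(x_{n},x_{n+1})\to 0$ and $\varphi (x_{n})\to 0$.

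The heart of the argument, and the step I expect to be the main obstacle, is proving that $\{x_{n}\}$ is Cauchy. I would argue by contradiction: if not, extract $\varepsilon >0$ and indices $m(k)>n(k)>k$ with $d(x_{m(k)},x_{n(k)})\ge \varepsilon$ and $m(k)$ chosen minimal, so $d(x_{m(k)-1},x_{n(k)})<\varepsilon$. Using $d(x_{n},x_{n+1})\to 0$ and repeated triangle inequalities, one shows that the four distances $d(x_{m(k)},x_{n(k)})$, $d(x_{m(k)+1},x_{n(k)+1})$, $d(x_{m(k)},x_{n(k)+1})$, $d(x_{n(k)},x_{m(k)+1})$ all converge to $\varepsilon$; since $\varphi (x_{n})\to 0$, the associated quantities $m(x_{n(k)},x_{m(k)},d,T,\varphi )$ and $l(x_{n(k)},x_{m(k)},d,T,\varphi )$ and the left-hand term all tend to $\varepsilon$. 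Feeding $(x,y)=(x_{n(k)},x_{m(k)})$ into the inequality and passing to the limit yields $\psi (\varepsilon )\le \psi (\varepsilon )-\phi (\varepsilon )$, so $\phi (\varepsilon )=0$ and $\varepsilon =0$, a contradiction. Thus $\{x_{n}\}$ is Cauchy and, by completeness, $x_{n}\to z$ for some $z\in X$.

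Finally I would identify and verify $z$. Lower semicontinuity of $\varphi$ gives $\varphi (z)\le \liminf \varphi (x_{n})=0$, hence $\varphi (z)=0$. Applying the inequality with $(x,y)=(x_{n},z)$ and letting $n\to \infty$, both $m(x_{n},z,d,T,\varphi )$ and $l(x_{n},z,d,T,\varphi )$ converge to $c:=d(z,Tz)+\varphi (Tz)$, as does the left-hand term, so $\psi (c)\le \psi (c)-\phi (c)$ forces $c=0$; therefore $z=Tz$ and $\varphi (Tz)=0$. For uniqueness, if $w=Tw$ with $\varphi (w)=0$, then taking $(x,y)=(z,w)$ collapses both $m$ and $l$ to $d(z,w)$, and $\psi (d(z,w))\le \psi (d(z,w))-\phi (d(z,w))$ gives $d(z,w)=0$, i.e. $z=w$.
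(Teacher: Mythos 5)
Your proof is correct, but there is nothing in the paper to compare it against step by step: this theorem is stated as a preliminary, quoted from Cho's paper, and is never proved here. Your argument is the classical metric-space one, and its three load-bearing ingredients are exactly the ones that are \emph{unavailable} in the setting the paper actually works in. Specifically, you use (i) the triangle inequality to bound the quadrilateral term $\tfrac{1}{2}\left[ d(x,Ty)+\varphi(x)+\varphi(Ty)+d(y,Tx)+\varphi(Tx)+\varphi(y)\right]$ of $m$ by $\max\{a_{n-1},a_n\}$, (ii) the standard minimal-index extraction to get the Cauchy contradiction, and (iii) continuity of the metric to identify the limit point; note also that you never use monotonicity of $\psi$, which is correct since Cho's $\psi$ is only continuous with $\psi(t)=0\Leftrightarrow t=0$. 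The instructive comparison is with the paper's proof of its own Theorem 3.2, the $b$-rectangular, C-class analogue: it follows the same skeleton (Picard iteration, $a_n$ nonincreasing, $a_n\to 0$, Cauchy by contradiction, lower semicontinuity of $\varphi$ at the limit, uniqueness), but in a $b$-rectangular space both the triangle inequality and continuity of $d$ fail, so each of your three ingredients must be replaced: the authors additionally prove $d(x_n,x_{n+2})\to 0$, invoke Lemma 2.4 instead of your extraction argument, use the $\liminf$/$\limsup$ comparison of Lemma 2.3 in place of continuity of $d$ (this is what forces the factor $s^{2}$ into their contraction condition), and finish existence through a periodic-point argument rather than a direct limit identification. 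In short: your proof is a sound reconstruction of the cited metric-space result, and reading it against Theorem 3.2 shows precisely where the paper's generalization has to do extra work.
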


In 2014 the concept of $C$-class functions was introduced by H. Ansari in 
\cite{aha} .

\begin{definition}
	\label{C-class}\cite{aha}A mapping $F:[0,\infty )^{2}\rightarrow \mathbb{R}$
	is called \textit{$C$-class} function if it is continuous and satisfies
	following axioms:
	
	(1) $F(s,t)\leq s$;
	
	(2) $F(s,t)=s$ implies that either $s=0$ or $t=0$; for all $s,t\in \lbrack
	0,\infty )$.
\end{definition}

Note for some $F$ we have that $F(0,0)=0$.

We denote $C$-class functions \ as $\mathcal{C}$.

\begin{example}
	\label{C-class examp}\cite{aha}The following functions $F:[0,\infty
	)^{2}\rightarrow \mathbb{R}$ are elements of $\mathcal{C}$, for all $s,t\in
	\lbrack 0,\infty )$:
	
	(1) $F(s,t)=s-t$, $F(s,t)=s\Rightarrow t=0$;
	
	(2) $F(s,t)=ms$, $0{<}m{<}1$, $F(s,t)=s\Rightarrow s=0$;
	
	(3) $F(s,t)=\frac{s}{(1+t)^{r}}$; $r\in (0,\infty )$, $F(s,t)=s$ $%
	\Rightarrow $ $s=0$ or $t=0$;
	
	(4) $F(s,t)=\log (t+a^{s})/(1+t)$, $a>1$, $F(s,t)=s$ $\Rightarrow $ $s=0$ or 
	$t=0$;
	
	(5) $F(s,t)=\ln (1+a^{s})/2$, $a>e$, $F(s,1)=s$ $\Rightarrow $ $s=0$;
	
	(6) $F(s,t)=(s+l)^{(1/(1+t)^{r})}-l$, $l>1,r\in (0,\infty )$, $F(s,t)=s$ $%
	\Rightarrow $ $t=0$;
	
	(7) $F(s,t)=s\log _{t+a}a$, $a>1$, $F(s,t)=s\Rightarrow $ $s=0$ or $t=0$;
	
	(8) $F(s,t)=s-(\frac{1+s}{2+s})(\frac{t}{1+t})$, $F(s,t)=s\Rightarrow t=0$;
	
	(9) $F(s,t)=s\beta (s)$, $\beta :[0,\infty )\rightarrow (0,1)$,and is
	continuous, $F(s,t)=s\Rightarrow s=0$;
	
	(10) $F(s,t)=s-\frac{t}{k+t},F(s,t)=s\Rightarrow t=0$;
	
	(11) $F(s,t)=s-\varphi (s),F(s,t)=s\Rightarrow s=0,$here $\varphi :[0,\infty
	)\rightarrow \lbrack 0,\infty )$ is a continuous function such that $\varphi
	(t)=0\Leftrightarrow t=0$;
	
	(12) $F(s,t)=sh(s,t),F(s,t)=s\Rightarrow s=0,$here $h:[0,\infty )\times
	\lbrack 0,\infty )\rightarrow \lbrack 0,\infty )$is a continuous function
	such that $h(t,s)<1$ for all $t,s>0$;
	
	(13) $F(s,t)=s-(\frac{2+t}{1+t})t$, $F(s,t)=s\Rightarrow t=0$.
\end{example}

(14) $F(s,t)=\sqrt[n]{\ln (1+s^{n})}$, $F(s,t)=s\Rightarrow s=0$.

(15) $F(s,t)=\phi (s),F(s,t)=s\Rightarrow s=0,$here $\phi :[0,\infty
)\rightarrow \lbrack 0,\infty )$ is a upper semicontinuous function such
that $\phi (0)=0,$ and $\phi (t)<t$ for $t>0,$

(16)$F(s,t)=\frac{s}{(1+s)^{r}}$; $r\in (0,\infty )$, $F(s,t)=s$ $%
\Rightarrow $ $s=0$ ;

\begin{definition}
	\cite{aha}Let $\Phi _{u}$ denote the class of the functions $\varphi
	:[0,\infty )\rightarrow \lbrack 0,\infty )$ which satisfy the following
	conditions:
\end{definition}

\begin{enumerate}
	\item[$(a)$] $\varphi $ is continuous ;
	
	\item[$(b)$] $\varphi (t)>0,t>0$ \ and $\varphi (0)\geq 0$\ .
\end{enumerate}

\bigskip

\begin{definition}
	\label{3.1 +} \cite{mdk}Let $X$ be a complete $b$-rectangular metric space
	with metric $d$ and parameter $s$ and $T:X\rightarrow X$. Also let $\varphi
	:X\rightarrow \mathbb{R^{+}}$ be a lower semicontinuous function. Then $T$
	is called a generalized weakly contractive mapping if it satisfies the
	following condition: 
	\begin{equation*}
		\psi \left( s^{2}d\left( Tx,Ty\right) +\varphi \left( Tx\right) +\varphi
		\left( Ty\right) \right) \leq \psi \left( M\left( x,y,d,T,\varphi \right)
		\right) -\phi \left( M\left( x,y,d,T,\varphi \right) \right) ,
	\end{equation*}
	where 
	\begin{equation*}
		M(x,y,d,T,\varphi )=\max \{d(x,y)+\varphi (x)+\varphi (y),d(x,Tx)+\varphi
		(x)+\varphi (Tx),d(y,Ty)+\varphi (y)+\varphi (Ty)\}
	\end{equation*}
	for all $x,y\in X$, and $\psi $ is an altering distance function and $\phi : 
	\mathbb{R^{+}}\rightarrow \mathbb{R^{+}}$ is a lower semicontinuous function
	with $\phi (t)=0$ if and only if $t=0$.
\end{definition}

\begin{theorem}
	\label{thm3.2+}\cite{mdk} Let X be a complete $b$-rectangular metric space
	with parameter s $\geq 1$. If $T$ is a generalized weakly contractive
	mapping, then $T$ has a unique fixed point $z\in X$ such that $z=Tz$ and $%
	\varphi (z)=0$.
\end{theorem}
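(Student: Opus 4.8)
The plan is to run the Picard iteration and prove the four standard facts: the orbit may be taken to consist of distinct points, the two ``diagonal'' quantities $d(x_n,x_{n+1})$ and $d(x_n,x_{n+2})$ tend to $0$, the orbit is Cauchy, and its limit is the desired fixed point. Fix $x_0\in X$ and set $x_{n+1}=Tx_n$. If $x_n=x_{n+1}$ for some $n$ then $x_n$ is already a fixed point, and applying Definition \ref{3.1 +} with $x=y=x_n$ forces $\varphi(x_n)=0$; so we may assume $x_n\neq x_{n+1}$ for all $n$, and (using the monotonicity of $\{a_n\}$ obtained just below, which makes any periodic orbit constant) that all $x_n$ are pairwise distinct. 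Write $a_n=d(x_n,x_{n+1})+\varphi(x_n)+\varphi(x_{n+1})$. Taking $x=x_{n-1}$, $y=x_n$ and using $Tx_{n-1}=x_n$, $Tx_n=x_{n+1}$ gives $M(x_{n-1},x_n)=\max\{a_{n-1},a_n\}$ and, since $s^2\ge1$ and $\psi$ is nondecreasing,
\[
\psi(a_n)\le\psi(\max\{a_{n-1},a_n\})-\phi(\max\{a_{n-1},a_n\}).
\]
Were the maximum equal to $a_n$ we would get $\phi(a_n)\le0$, hence $a_n=0$; thus $\{a_n\}$ is nonincreasing and converges to some $L\ge0$. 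Passing to the limit with $\psi$ continuous and $\phi$ lower semicontinuous yields $\phi(L)=0$, so $L=0$. In particular $d(x_n,x_{n+1})\to0$ and $\varphi(x_n)\to0$.

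Next I would show $d(x_n,x_{n+2})\to0$, the step needing the most care. Put $e_n=d(x_n,x_{n+2})+\varphi(x_n)+\varphi(x_{n+2})$; taking $x=x_{n-1}$, $y=x_{n+1}$ gives $M(x_{n-1},x_{n+1})=\max\{e_{n-1},a_{n-1},a_{n+1}\}$ together with $\psi(e_n)\le\psi(M(x_{n-1},x_{n+1}))-\phi(M(x_{n-1},x_{n+1}))$. Because the two vanishing terms $a_{n-1},a_{n+1}$ inside the maximum obstruct a naive monotonicity argument (and $\psi$ is only nondecreasing), I would instead work with $L'=\limsup_n e_n$: choosing a subsequence along which $e_{n_k}\to L'$ and, after a further extraction, $e_{n_k-1}\to e^\ast\le L'$, the relation $a_n\to0$ forces $M(x_{n_k-1},x_{n_k+1})\to e^\ast$. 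Continuity of $\psi$ and lower semicontinuity of $\phi$ then give $\psi(L')\le\psi(e^\ast)-\phi(e^\ast)\le\psi(L')-\phi(e^\ast)$, whence $\phi(e^\ast)=0$, $e^\ast=0$, and finally $\psi(L')=0$, i.e. $L'=0$. Hence $d(x_n,x_{n+2})\to0$.

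With both $d(x_n,x_{n+1})\to0$ and $d(x_n,x_{n+2})\to0$ available, I would prove $\{x_n\}$ is Cauchy by contradiction, and this is the main obstacle. If it were not Cauchy, Lemma \ref{2?4} supplies $\varepsilon>0$ and indices $m(k),n(k)$ controlling $d(x_{m(k)},x_{n(k)})$, $d(x_{n(k)},x_{m(k)+1})$, $d(x_{m(k)},x_{n(k)+1})$ within $[\varepsilon,s\varepsilon]$ and $d(x_{m(k)+1},x_{n(k)+1})$ within $[\varepsilon/s,\,s^2\varepsilon]$. Applying Definition \ref{3.1 +} with $x=x_{m(k)}$, $y=x_{n(k)}$ and using $\varphi(x_n)\to0$, $a_{m(k)},a_{n(k)}\to0$, the middle and last entries of $M(x_{m(k)},x_{n(k)})$ drop out, so along a subsequence $M(x_{m(k)},x_{n(k)})\to\alpha\in[\varepsilon,s\varepsilon]$, while the left-hand argument $s^2d(x_{m(k)+1},x_{n(k)+1})+\varphi(\,\cdot\,)+\varphi(\,\cdot\,)$ has, from $d(x_{m(k)+1},x_{n(k)+1})\ge\varepsilon/s$, a liminf at least $s\varepsilon\ge\alpha$; here the coefficient $s^2$ is present precisely to beat the factor $1/s$ in Lemma \ref{2?4}. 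Passing to the limit gives $\psi(\alpha)\le\psi(\alpha)-\phi(\alpha)$, forcing $\phi(\alpha)=0$ and $\alpha=0$, contradicting $\alpha\ge\varepsilon$. Thus $\{x_n\}$ is Cauchy and, by completeness, $x_n\to z$ for some $z\in X$.

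Finally I would identify $z$. Since $\varphi(x_n)\to0$ and $\varphi$ is lower semicontinuous, $\varphi(z)\le\liminf_n\varphi(x_n)=0$, so $\varphi(z)=0$. To see $z=Tz$, assume $z\neq Tz$ and apply Definition \ref{3.1 +} with $x=x_n$, $y=z$: then $M(x_n,z)\to d(z,Tz)+\varphi(Tz)=:D$, while Lemma \ref{2.3}(b) gives $\liminf_n d(x_{n+1},Tz)\ge\tfrac1s d(z,Tz)$, so the left-hand argument of $\psi$ has liminf at least $s\,d(z,Tz)+\varphi(Tz)\ge D$. Passing to the limit yields $\psi(D)\le\psi(D)-\phi(D)$, hence $D=0$ and $Tz=z$, a contradiction; therefore $z=Tz$. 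For uniqueness, note first that any fixed point $w$ satisfies $\varphi(w)=0$ (take $x=y=w$, as in the first paragraph). If $w=Tw$ with $z\neq w$, taking $x=z$, $y=w$ gives $M(z,w)=d(z,w)$ and left-hand argument $s^2d(z,w)\ge d(z,w)$, so $\psi(d(z,w))\le\psi(d(z,w))-\phi(d(z,w))$ and $d(z,w)=0$. This establishes existence and uniqueness.
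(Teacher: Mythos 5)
Your proposal is correct in substance and follows the same skeleton as the paper's own argument (the paper states this theorem as a result quoted from the reference, and its in-house proof of the generalization, Theorem \ref{thm3.2}, specializes to it by taking $F(s,t)=s-t$): Picard iteration, the nonincreasing sequence $a_n=d(x_n,x_{n+1})+\varphi(x_n)+\varphi(x_{n+1})$, the contradiction via Lemma \ref{2?4} for Cauchyness, Lemma \ref{2.3} plus lower semicontinuity of $\varphi$ to identify the limit, and the same uniqueness computation. Two of your deviations are genuine improvements rather than a different route. First, you dispense with the paper's ``periodic point'' double-contradiction scaffold: by noting that strict decrease of $\{a_n\}$ rules out eventually periodic orbits, you may assume all iterates pairwise distinct from the outset, which shortens the endgame and loses nothing. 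Second, where the paper passes to limits loosely (it writes $\phi(s\varepsilon)$ and $\psi(s\varepsilon)$ on the right-hand side although $M(x_{m(k)},x_{n(k)},d,T,\varphi)$ is only known to have $\limsup\le s\varepsilon$, and it invokes ``$\psi$ increasing'' to cancel $\psi$ although altering distance functions are only nondecreasing), your subsequential limits $\alpha$, $e^{*}$, $L'$ are exactly the correct repair.

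There is, however, one unstated fact your argument needs: in the step proving $d(x_n,x_{n+2})\to 0$ you set $L'=\limsup_n e_n$, extract $e_{n_k}\to L'$ and then a further subsequence with $e_{n_k-1}\to e^{*}$; this presupposes that $\{e_n\}$ is bounded (equivalently $L'<\infty$), which you never establish, and without it the whole limsup device is vacuous. The fix is one line, using the same trick you already use for $\{a_n\}$: with $M_n=\max\{e_{n-1},a_{n-1},a_{n+1}\}$, the contractive condition gives $\psi(e_n)\le\psi(M_n)-\phi(M_n)$; if $e_n>M_n$ then monotonicity of $\psi$ yields $\phi(M_n)\le 0$, hence $M_n=0$, hence $x_{n-1}=x_{n+1}$, contradicting distinctness of the iterates. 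Therefore $e_n\le M_n\le\max\{e_{n-1},a_0\}$, and by induction $e_n\le\max\{e_0,a_0\}$ for all $n$. With this inserted, your proof is complete; the paper's alternative at this point, proving $\max\{e_n,a_n\}\le\max\{e_{n-1},a_{n-1}\}$, gets boundedness for free, which is the one thing its formulation buys over yours.
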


\begin{definition}
	\cite{mdk}Let $X$ be a complete $b$-rectangular metric space with metric $d$
	and parameter $s$, and $T:X\rightarrow X$ be a mapping. Also let $\varphi
	:X\rightarrow \mathbb{R^{+}}$ be a lower semicontinuous function. Then $T$
	is called a generalized $\left( \psi ,\varphi ,\phi \right) $ contractive
	mapping if it satisfies the following condition: 
	\begin{equation}
		\psi \left( s^{2}d\left( Tx,Ty\right) +\varphi \left( Tx\right) +\varphi
		\left( Ty\right) \right) \leq \phi \left( M\left( x,y,d,T,\varphi \right)
		\right) ,
	\end{equation}
	where 
	\begin{equation*}
		M(x,y,d,T,\varphi )=\max \{d(x,y)+\varphi (x)+\varphi (y),d(x,Tx)+\varphi
		(x)+\varphi (Tx),d(y,Ty)+\varphi (y)+\varphi (Ty)\}
	\end{equation*}
	for all $x,y\in X$, and $\psi :\mathbb{R^{+}}\rightarrow \mathbb{R^{+}}$ is
	an altering distance function and $\phi :\mathbb{R^{+}}\rightarrow \mathbb{\
		R^{+}}$ is a right upper semi-continuous function with the condition: $\psi
	(t)>\phi (t)$ for all $t>0$ and $\phi (t)=0$ if and only if $t=0$.
\end{definition}

\begin{theorem}
	\label{thm3.35++}\cite{mdk} Let X be a complete $b$-rectangular metric space
	with parameter s $\geq 1$ and $T:X\rightarrow X$ be a mapping. If $T$ is a
	generalized $\left( \psi ,\varphi ,\phi \right) $ contractive mapping then $%
	T $ has a unique fixed point $z\in X$ such that $z=Tz$ and $\varphi (z)=0$.
\end{theorem}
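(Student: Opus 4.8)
The plan is to iterate $T$ from an arbitrary $x_0$, setting $x_{n+1}=Tx_n$, and to show the Picard orbit is Cauchy, then to identify its limit. First I would dispose of the degenerate case: if $x_{n_0}=x_{n_0+1}$ for some $n_0$, then taking $x=y=x_{n_0}$ collapses $M(x_{n_0},x_{n_0})$ to $2\varphi(x_{n_0})$, so the contraction gives $\psi(2\varphi(x_{n_0}))\le\phi(2\varphi(x_{n_0}))$, which forces $\varphi(x_{n_0})=0$ and exhibits the desired fixed point. So assume $x_n\neq x_{n+1}$ for all $n$ and put $a_n=d(x_n,x_{n+1})+\varphi(x_n)+\varphi(x_{n+1})>0$. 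Plugging $x=x_{n-1},\,y=x_n$ collapses $M$ to $\max\{a_{n-1},a_n\}$, and since $s\ge1$ and $\psi$ is nondecreasing this yields $\psi(a_n)\le\phi(\max\{a_{n-1},a_n\})$. The recurring device I would exploit is that $\phi(t)<\psi(t)$ for $t>0$ together with monotonicity of $\psi$ rules out $a_n>a_{n-1}$ (else $\psi(a_n)\le\phi(a_n)<\psi(a_n)$); hence $\{a_n\}$ decreases to some $L\ge0$, and continuity of $\psi$ with upper semicontinuity of $\phi$ applied to $\psi(a_n)\le\phi(a_{n-1})$ gives $\psi(L)\le\phi(L)$, so $L=0$. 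In particular $d(x_n,x_{n+1})\to0$ and $\varphi(x_n)\to0$.

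Next I would show the orbit has distinct terms and that $d(x_n,x_{n+2})\to0$. Distinctness follows from periodicity: if $x_n=x_{n+k}$ with $k\ge2$, applying $T$ repeatedly makes the orbit periodic, so $a_n=a_{n+k}=a_{n+2k}=\cdots$, and $a_m\to0$ then forces $a_n=0$, contradicting $a_n>0$. For $c_n=d(x_n,x_{n+2})+\varphi(x_n)+\varphi(x_{n+2})$, taking $x=x_{n-1},\,y=x_{n+1}$ gives $\psi(c_n)\le\phi(M_n)$ with $M_n=\max\{c_{n-1},a_{n-1},a_{n+1}\}$. The subtlety is that $M_n$ reintroduces $c_{n-1}$, so there is no direct monotonicity; instead I would first note $\psi(c_n)\le\phi(M_n)<\psi(M_n)$ forces $c_n\le M_n$, then pass to a subsequence with $c_{n_k}\to C:=\limsup_n c_n$. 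Since the $a$-terms vanish and $c_n\le M_n$, one gets $\limsup_k M_{n_k}\le C$ and $\liminf_k M_{n_k}\ge C$, hence $M_{n_k}\to C$; continuity of $\psi$ and upper semicontinuity of $\phi$ then give $\psi(C)\le\phi(C)$, so $C=0$.

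The hard part will be the Cauchy step, which I would run by contradiction using Lemma \ref{2?4}. If $\{x_n\}$ fails to be Cauchy, that lemma supplies $\varepsilon>0$ and indices $m(k),n(k)$ with its four limit estimates. Applying the contraction to $x=x_{m(k)},\,y=x_{n(k)}$ gives $\psi(v_k)\le\phi(M_k)$, where $v_k=s^2d(x_{m(k)+1},x_{n(k)+1})+\varphi(x_{m(k)+1})+\varphi(x_{n(k)+1})$ and $M_k=\max\{d(x_{m(k)},x_{n(k)})+\varphi(x_{m(k)})+\varphi(x_{n(k)}),a_{m(k)},a_{n(k)}\}$. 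Using $\varphi(x_n)\to0$ and the lemma, $\liminf_k v_k\ge s^2\cdot(\varepsilon/s)=s\varepsilon$ while $\liminf_k M_k\ge\varepsilon>0$ and $\limsup_k M_k\le s\varepsilon$. The genuine obstacle here is that $\phi$ is only upper semicontinuous, not monotone, so I cannot compare $\phi(M_k)$ with $\phi(s\varepsilon)$ term by term; the escape is again the strictness $\phi<\psi$. Passing to a subsequence along which $v_{k_j}\to V\ge s\varepsilon$ and $M_{k_j}\to M^*\in[\varepsilon,s\varepsilon]$, continuity of $\psi$ and upper semicontinuity of $\phi$ give $\psi(V)\le\phi(M^*)<\psi(M^*)$, whence $V\le M^*$ by monotonicity of $\psi$; combined with $V\ge s\varepsilon\ge M^*$ this pins $V=M^*=s\varepsilon$ and yields $\psi(s\varepsilon)\le\phi(s\varepsilon)$, a contradiction. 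Thus $\{x_n\}$ is Cauchy and, by completeness, converges to some $z$.

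Finally I would identify the limit. Lower semicontinuity of $\varphi$ with $\varphi(x_n)\to0$ gives $\varphi(z)=0$. To see $Tz=z$, suppose not, so $R:=d(z,Tz)+\varphi(Tz)>0$. Taking $x=x_n,\,y=z$ makes $M(x_n,z)$ equal to $R$ for large $n$ (the $d(x_n,z)+\varphi(x_n)$ and $a_n$ entries vanish), while Lemma \ref{2.3}(b) applied to the distinct Cauchy sequence $\{x_{n+1}\}\to z\neq Tz$ gives $\liminf_n d(x_{n+1},Tz)\ge\tfrac1s d(z,Tz)$, hence $\liminf_n v_n\ge s\,d(z,Tz)+\varphi(Tz)\ge R$; the same monotonicity-plus-strictness argument forces $\psi(R)\le\phi(R)$, contradicting $R>0$, so $z=Tz$. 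For uniqueness, any fixed point $w$ has $\varphi(w)=0$ by the degenerate-case computation, and for distinct fixed points $z,w$ the contraction reduces to $\psi(s^2d(z,w))\le\phi(d(z,w))$, i.e.\ $\psi(d(z,w))\le\phi(d(z,w))$, impossible unless $d(z,w)=0$. This completes the plan.
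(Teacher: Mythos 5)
This statement is one the paper never actually proves: Theorem \ref{thm3.35++} is quoted in the preliminaries from \cite{mdk}, and it is not a formal corollary of the paper's own Theorem \ref{thm3.2} (taking $F(s,t)=t$ violates axiom (1) of Definition \ref{C-class}), so your self-contained argument genuinely fills a gap in what is on the page. Your proof is correct and follows the same skeleton as the paper's proofs of its analogous C-class theorems: Picard iteration, monotone decay of $a_n=d(x_n,x_{n+1})+\varphi(x_n)+\varphi(x_{n+1})$, control of $d(x_n,x_{n+2})$, a contradiction argument via Lemma \ref{2?4} for the Cauchy property, Lemma \ref{2.3} to identify the limit, and the $x=y$ substitution for uniqueness. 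Two of your choices are genuine improvements over that template. First, instead of the paper's device of assuming ``$T$ has no periodic point'' and discharging the assumption only at the very end, you prove pairwise distinctness of the orbit directly (eventual periodicity would make $a_n$ periodic, and $a_n\to 0$ then forces $a_n=0$), which is precisely the hypothesis that Lemma \ref{2.3}(b) requires. Second, because $\phi$ here is merely right upper semicontinuous, non-monotone, and tied to $\psi$ only by the strict inequality $\psi(t)>\phi(t)$ for $t>0$, the paper-style termwise limit passages inside $\phi$ are unavailable; your recurring subsequence-plus-strictness device (extract a subsequence along which the relevant quantities converge, then let continuity of $\psi$, upper semicontinuity of $\phi$, and $\phi<\psi$ force the limit to vanish) is the correct rigorous substitute, and you deploy it uniformly in the $c_n$ step, the Cauchy step, and the $Tz=z$ step. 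The only wrinkle is in the Cauchy step: you pass to a subsequence with $v_{k_j}\to V$, but since $\psi$ may be bounded, $V=+\infty$ is not excluded a priori; this is cosmetic, because you never need $V$ itself --- from $\liminf_j v_{k_j}\geq s\varepsilon$ together with monotonicity and continuity of $\psi$ you get $\liminf_j\psi(v_{k_j})\geq\psi(s\varepsilon)$, while $\psi(v_{k_j})\leq\phi(M_{k_j})$ and upper semicontinuity give $\limsup_j\psi(v_{k_j})\leq\phi(M^{*})<\psi(M^{*})\leq\psi(s\varepsilon)$, which is the same contradiction.
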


\section{Main results}

Aspired by idea of the generalized weakly contractive mapping on metric
space introduced by Cho \cite{CHO}, we introduce the notion of C-Class
function for weakly contractive mapping on rectangular $b$-metric space and
establish some fixed point on such mapping.

\begin{definition}
	\label{3.1} Let $X$ be a complete $b$-rectangular metric space with metric $%
	d $ and parameter $s$ and $T:X\rightarrow X$. Also let $\varphi
	:X\rightarrow R^{+}$ be a lower semicontinuous function. Then $T$ is called
	a $C $ class $-F-$generalized weakly contractive mapping if it satisfies the
	following condition: 
	\begin{equation}
		\psi \left( s^{2}d\left( Tx,Ty\right) +\varphi \left( Tx\right) +\varphi
		\left( Ty\right) \right) \leq F(\psi \left( M\left( x,y,d,T,\varphi \right)
		),\phi \left( M\left( x,y,d,T,\varphi \right) \right) \right) ,
	\end{equation}%
	where 
	\begin{equation*}
		M(x,y,d,T,\varphi )=\max \{d(x,y)+\varphi (x)+\varphi (y),d(x,Tx)+\varphi
		(x)+\varphi (Tx),d(y,Ty)+\varphi (y)+\varphi (Ty)\}
	\end{equation*}%
	for all $x,y\in X$, and $\psi \in \Phi ,\phi \in \Phi _{u}$, $\ F\in 
	\mathcal{C}$ .
\end{definition}

\begin{theorem}
	\label{thm3.2} Let X be a complete $b$-rectangular metric space with
	parameter s $\geq 1 $. If $T $ is a is called a $C $ class $-F-$generalized
	weakly contractive mapping, then $T $ has a unique fixed point $z\in X $
	such that $z = Tz $ and $\varphi(z) = 0 $.
\end{theorem}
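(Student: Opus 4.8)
The plan is to run the Picard iteration and show that its orbit is Cauchy, then to identify the limit as the required fixed point. Fix $x_0\in X$ and set $x_{n+1}=Tx_n$. If $x_{n_0+1}=x_{n_0}$ for some $n_0$, then $x_{n_0}$ is already a fixed point, and taking $x=y=x_{n_0}$ in the contraction gives $\psi(2\varphi(x_{n_0}))\le F(\psi(2\varphi(x_{n_0})),\phi(2\varphi(x_{n_0})))\le\psi(2\varphi(x_{n_0}))$, so the $C$-class axiom (together with $\phi\in\Phi_u$ vanishing only at $0$) forces $\varphi(x_{n_0})=0$ and we are done; hence I may assume the iterates are pairwise distinct, which is what lets Lemmas \ref{2.3} and \ref{2?4} apply. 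Writing $a_n=d(x_n,x_{n+1})+\varphi(x_n)+\varphi(x_{n+1})$ and applying the contraction to the pair $(x_{n-1},x_n)$, one computes $M(x_{n-1},x_n)=\max\{a_{n-1},a_n\}$, while because $s\ge1$ the left side dominates $\psi(a_n)$. If $a_n>a_{n-1}$ then $\psi(a_n)\le F(\psi(a_n),\phi(a_n))\le\psi(a_n)$, forcing $a_n=0$ by the $C$-class axiom, a contradiction; so $\{a_n\}$ decreases to some $L\ge0$, and passing to the limit (all of $\psi,\phi,F$ being continuous) gives $\psi(L)\le F(\psi(L),\phi(L))\le\psi(L)$, whence $L=0$. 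In particular $d(x_n,x_{n+1})\to0$ and $\varphi(x_n)\to0$.

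Next I would prove $d(x_n,x_{n+2})\to0$, and this is the step I expect to be the main obstacle, since the rectangular structure pushes a distance-two term into the majorant. Applying the contraction to $(x_{n-1},x_{n+1})$ and setting $c_n=d(x_n,x_{n+2})+\varphi(x_n)+\varphi(x_{n+2})$, one finds $M(x_{n-1},x_{n+1})=\max\{c_{n-1},a_{n-1},a_{n+1}\}$, so that $\psi(c_n)\le F(\psi(\max\{c_{n-1},a_{n-1},a_{n+1}\}),\phi(\,\cdot\,))\le\psi(\max\{c_{n-1},a_{n-1},a_{n+1}\})$. Since $a_{n-1},a_{n+1}\to0$ by the first step, working along a subsequence that realizes $D:=\limsup_n c_n$ shows that the majorant also tends to $D$, and the limiting inequality $\psi(D)\le F(\psi(D),\phi(D))\le\psi(D)$ forces $D=0$ by the $C$-class axiom; thus $d(x_n,x_{n+2})\to0$.

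With both $d(x_n,x_{n+1})\to0$ and $d(x_n,x_{n+2})\to0$ available, I invoke Lemma \ref{2?4}: were $\{x_n\}$ not Cauchy there would be $\varepsilon>0$ and indices $m(k),n(k)$ obeying the four limit relations. Applying the contraction to $(x_{m(k)},x_{n(k)})$, passing to a subsequence along which $d(x_{m(k)},x_{n(k)})$ and $d(x_{m(k)+1},x_{n(k)+1})$ converge, and using $\varphi(x_n)\to0$, the majorant tends to some $M_\ast\in[\varepsilon,s\varepsilon]$, while the left side tends to $\psi(s^2\ell)$ with $\ell:=\lim d(x_{m(k)+1},x_{n(k)+1})\ge\varepsilon/s$; the point of the factor $s^2$ in the definition is exactly that $s^2\ell\ge s\varepsilon\ge M_\ast$. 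Hence $\psi(M_\ast)\le\psi(s^2\ell)\le F(\psi(M_\ast),\phi(M_\ast))\le\psi(M_\ast)$, and the $C$-class axiom gives $M_\ast=0$, contradicting $M_\ast\ge\varepsilon$. So $\{x_n\}$ is Cauchy and, by completeness, converges to some $z\in X$; lower semicontinuity of $\varphi$ gives $\varphi(z)\le\liminf\varphi(x_n)=0$. To get $z=Tz$, I apply the contraction to $(x_n,z)$: the majorant converges to $W:=d(z,Tz)+\varphi(Tz)$, while Lemma \ref{2.3}(b) bounds the left side below by $\psi(s\,d(z,Tz)+\varphi(Tz))\ge\psi(W)$, and the $C$-class axiom again forces $W=0$, i.e. $Tz=z$ and $\varphi(Tz)=0$.

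Finally, for uniqueness, if $z$ and $w$ are fixed points then the computation above shows $\varphi(z)=\varphi(w)=0$, so applying the contraction to $(z,w)$ collapses the majorant to $d(z,w)$ and the left side to $\psi(s^2d(z,w))\ge\psi(d(z,w))$; the resulting equality $F(\psi(d(z,w)),\phi(d(z,w)))=\psi(d(z,w))$ forces $d(z,w)=0$. The one genuinely delicate feature throughout is that a $b$-rectangular metric need not be continuous, so every limit passage must be routed through the $\liminf/\limsup$ estimates of Lemmas \ref{2.3} and \ref{2?4} rather than through naive continuity of $d$; the distance-two step of the second paragraph is where this is most acute, since there the contractive inequality feeds back on the very quantity one is trying to control.
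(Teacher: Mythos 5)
Your proposal follows essentially the same route as the paper: the same Picard iteration, the same monotonicity argument for $d(x_n,x_{n+1})+\varphi(x_n)+\varphi(x_{n+1})$, the same appeal to Lemma \ref{2?4} with the $s^2$ factor absorbing the gap between $\varepsilon/s$ and $s\varepsilon$, and the same use of Lemma \ref{2.3}(b) together with the $C$-class axiom at the limit. In two places you are actually cleaner than the paper: you avoid its roundabout ``no periodic point, then no fixed point'' double-contradiction by going directly to the Cauchy limit, and in the uniqueness step you justify $\varphi(w)=0$ for an arbitrary fixed point $w$ (by applying the contraction to the pair $(w,w)$), a point the paper silently assumes when it writes $M(z,u,d,T,\varphi)=d(z,u)$.

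There are two places where you are too quick, both fixable with tools you already set up. First, ``$x_{n+1}\neq x_n$ for all $n$'' does not by itself give pairwise distinct iterates (nothing yet rules out a periodic orbit of period $\geq 2$), and pairwise distinctness is exactly what Lemma \ref{2.3}(b) requires; the fix is to note that if $x_m=x_n$ with $m>n$, then the sequence $a_k=d(x_k,x_{k+1})+\varphi(x_k)+\varphi(x_{k+1})$ is periodic for $k\geq n$, and a periodic nonincreasing sequence converging to $0$ (your first step) must be identically $0$, forcing $x_{k+1}=x_k$, which you have already excluded. The paper instead runs the whole argument under the hypothesis that $T$ has no periodic point, which achieves the same end at the cost of the convoluted structure you avoided. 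Second, in the distance-two step, along a subsequence realizing $D=\limsup_n c_n$ the majorant $\max\{c_{n-1},a_{n-1},a_{n+1}\}$ need not tend to $D$: the terms $c_{n_k-1}$ are only known to satisfy $\limsup_k c_{n_k-1}\leq D$. The repair is to pass to a further subsequence along which the majorant converges to some $M_*\leq D$; then $\psi(D)\leq F(\psi(M_*),\phi(M_*))\leq\psi(M_*)\leq\psi(D)$, so all terms are equal, the $C$-class axiom gives $M_*=0$, and hence $\psi(D)\leq\psi(0)=0$, i.e.\ $D=0$. (This also presupposes $\{c_n\}$ is bounded, which follows from $c_n\leq\max\{c_{n-1},a_{n-1},a_{n+1}\}$ --- the same ``$\psi$ is increasing'' step that the paper itself invokes.) With these two patches your argument is complete and, if anything, tighter than the published proof.
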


\begin{proof}
	Let $x_{0}\in X$ be an arbitrary point in $X $. Then we define the sequence $%
	\left\lbrace x_{n}\right\rbrace$ by $x_{n+1} =Tx_{n}$, for all $n\in \mathbb{%
		\ N}.$
	
	If there exists $n_0\in \mathbb{N}$ such that $x_{n_0}=x_{n_0+1} =0$, then $%
	x_{n_0} $ is a fixed point of $T $.
	
	Next, we assume that $x_{n}\neq x_{n+1}$.\newline
	We claim that 
	\begin{equation*}
		\lim_{n\rightarrow +\infty }d\left( x_{n,}x_{n+1}\right) =0
	\end{equation*}
	and 
	\begin{equation*}
		\lim_{n\rightarrow +\infty }d\left( x_{n,}x_{n+2}\right) =0.
	\end{equation*}
	Letting $x=x_{n-1}$ and $y=x_{n}$ in (\ref{3.1}) for all $n\in $ $\mathbb{N}$
	, we have 
	\begin{eqnarray}
		&&\psi \left( s^{2}d\left( Tx_{n-1},Tx_{n}\right) +\varphi \left(
		Tx_{n-1}\right) +\varphi \left( Tx_{n}\right) \right)  \label{3.4} \\
		&&\qquad \leq F(\psi \left( M\left( x_{n-1},x_{n},d,T,\varphi \right)
		\right) ,\phi \left( M\left( x_{n-1},x_{n},d,T,\varphi \right) \right) ), 
		\notag
	\end{eqnarray}
	where 
	\begin{multline*}
		M\left( x_{n-1},x_{n},d,T,\varphi \right) =\max \{d(x_{n-1},x_{n})+\varphi
		(x_{n-1})+\varphi (x_{n}),d(x_{n-1},x_{n})+\varphi (x_{n-1}) \\
		+\varphi (x_{n}),d(x_{n},Tx_{n})+\varphi (x_{n})+\varphi (Tx_{n})\} \\
		=\max \{d(x_{n-1},x_{n})+\varphi (x_{n-1})+\varphi
		(x_{n}),d(x_{n},x_{n+1})+\varphi (x_{n})+\varphi (Tx_{n+1})\}.
	\end{multline*}
	If $M\left( x_{n-1},x_{n},d,T,\varphi \right) =d(x_{n},x_{n+1})+\varphi
	(x_{n})+\varphi (x_{n+1})$, then we have 
	\begin{align*}
		\psi \left( d\left( Tx_{n-1},Tx_{n}\right) +\varphi \left( Tx_{n-1}\right)
		+\varphi \left( x_{n+1}\right) \right) & =\psi \left( d\left(
		x_{n},x_{n+1}\right) +\varphi \left( x_{n}\right) +\varphi \left(
		x_{n+1}\right) \right) \\
		& \leq \psi \left( s^{2}d\left( x_{n},x_{n+1}\right) +\varphi \left(
		x_{n}\right) +\varphi \left( x_{n+1}\right) \right) \\
		& \leq F(\psi \left( d\left( x_{n},x_{n+1}\right) +\varphi \left(
		x_{n}\right) +\varphi \left( x_{n+1}\right) \right) \\
		& ,\phi \left( d\left( x_{n},x_{n+1}\right) +\varphi \left( x_{n}\right)
		+\varphi \left( x_{n+1}\right) \right) ),
	\end{align*}
	which implies 
	\begin{equation*}
		\phi \left( d\left( x_{n},x_{n+1}\right) +\varphi \left( x_{n}\right)
		+\varphi \left( x_{n+1}\right) \right) =0,
	\end{equation*}
	and so 
	\begin{equation*}
		d\left( x_{n},x_{n+1}\right) +\varphi \left( x_{n}\right) +\varphi \left(
		x_{n+1}\right) =0.
	\end{equation*}
	Hence 
	\begin{equation*}
		d\left( x_{n},x_{n+1}\right) =0\ and\ \varphi \left( x_{n}\right) =\varphi
		\left( x_{n+1}\right) =0,
	\end{equation*}
	which is a contradiction. Thus we have 
	\begin{equation}
		d\left( x_{n},x_{n+1}\right) +\varphi \left( x_{n}\right) +\varphi \left(
		x_{n+1}\right) \leq d\left( x_{n-1},x_{n}\right) +\varphi \left(
		x_{n-1}\right) +\varphi \left( x_{n}\right) ,\ for\ all\ n=1,2,3,...,
		\label{3.5}
	\end{equation}
	and 
	\begin{equation}
		M\left( x_{n-1},x_{n},d,T,\varphi \right) =d(x_{n-1},x_{n})+\varphi
		(x_{n-1})+\varphi (x_{n}),\ for\ all\ n=1,2,3,....  \label{3.6}
	\end{equation}
	for all $n=1,2,3,....$ It follows from (\ref{3.4}) that 
	\begin{equation*}
		\psi \left( d\left( x_{n},x_{n+1}\right) +\varphi \left( x_{n}\right)
		+\varphi \left( x_{n+1}\right) \right) \leq F(\psi \left( d\left(
		x_{n-1},x_{n}\right) +\varphi \left( x_{n-1}\right) +\varphi \left(
		x_{n}\right) \right)
	\end{equation*}
	\begin{equation}
		,\phi \left( d\left( x_{n-1},x_{n}\right) +\varphi \left( x_{n-1}\right)
		+\varphi \left( x_{n}\right) \right) ).  \label{3.7}
	\end{equation}
	It follows from (\ref{3.5}) that the sequence $\{d\left(
	x_{n},x_{n+1}\right) +\varphi \left( x_{n}\right) +\varphi \left(
	x_{n+1}\right) \}_{n\in \mathbb{N}}$ is nonincreasing.\newline
	Hence $d\left( x_{n},x_{n+1}\right) +\varphi \left( x_{n}\right) +\varphi
	\left( x_{n+1}\right) \rightarrow r$ as $n\rightarrow +\infty $ for some $%
	r\geq 0$. Assume $r>0$ and letting $n\rightarrow +\infty $ in (\ref{3.7})
	and using the continuity of $\psi $ and the lower semicontinuity of $\phi $,
	we have 
	\begin{align*}
		\psi \left( r\right) & \leq \psi \left( s^{2}r\right) \leq F(\psi \left(
		r\right) ,\liminf_{n\rightarrow \infty }\phi \left( d\left(
		x_{n},x_{n+1}\right) +\varphi \left( x_{n}\right) +\varphi \left(
		x_{n+1}\right) \right) ) \\
		& \leq F(\psi \left( r\right) ,\phi \left( r\right) ).
	\end{align*}
	It follows that $\psi \left( r\right) =0,$ $\ $or$\ \ \ ,\phi \left(
	r\right) =0$, hence we have $r=0$ and consequently, $\lim_{n\rightarrow
		+\infty }d\left( x_{n},x_{n+1}\right) +\varphi \left( x_{n}\right) +\varphi
	\left( x_{n+1}\right) =0.$ So 
	\begin{equation}
		\lim_{n\rightarrow +\infty }d\left( x_{n},x_{n+1}\right) =0,  \label{3.8}
	\end{equation}
	\begin{equation}
		\lim_{n\rightarrow +\infty }\varphi \left( x_{n}\right) =\lim_{n\rightarrow
			+\infty }\varphi \left( x_{n+1}\right) =0.  \label{3.9}
	\end{equation}
	
	Now, we shall prove that $T $ has a periodic point. Suppose that it is not
	the case. Then $x_{n}\neq x_{m}$ for all $n,m\in $ $\mathbb{N},\ n\neq m.$
	
	In (\ref{3.1}), letting $x=x_{n-1}$ and $y=x_{n+1}$, we have 
	\begin{eqnarray*}
		&&\psi \left( s^{2}d\left( Tx_{n-1},Tx_{n+1}\right) +\varphi \left(
		Tx_{n-1}\right) +\varphi \left( Tx_{n+1}\right) \right) \\
		&&\qquad \leq F(\psi \left( M\left( x_{n-1},x_{n+1},d,T,\varphi \right)
		\right) ,\phi \left( M\left( x_{n-1},x_{n+1},d,T,\varphi \right) \right) ),
	\end{eqnarray*}
	where 
	\begin{eqnarray*}
		M\left( x_{n-1},x_{n+1},d,T,\varphi \right) &=&\max
		\{d(x_{n-1},x_{n+1})+\varphi (x_{n-1})+\varphi (x_{n+1}), \\
		&&d(x_{n-1},x_{n})+\varphi (x_{n-1})+\varphi
		(x_{n}),d(x_{n+1},x_{n+2})+\varphi (x_{n+1})+\varphi (x_{n+2})\} \\
		&=&\max \{d(x_{n-1},x_{n+1})+\varphi (x_{n-1})+\varphi
		(x_{n+1}),d(x_{n-1},x_{n})+\varphi (x_{n-1})+\varphi (x_{n})\}.
	\end{eqnarray*}
	So we get 
	\begin{eqnarray}
		&&\psi \left( d\left( x_{n},x_{n+2}\right) +\varphi \left( x_{n}\right)
		+\varphi \left( x_{n+2}\right) \right) \leq \psi \left( s^{2}d\left(
		x_{n},x_{n+2}\right) +\varphi \left( x_{n}\right) +\varphi \left(
		x_{n+2}\right) \right)  \label{3.11} \\
		&\leq &F(\psi \left( \max \{d(x_{n-1},x_{n+1})+\varphi (x_{n-1})+\varphi
		(x_{n+1}),d(x_{n-1},x_{n})+\varphi (x_{n-1})+\varphi (x_{n})\}\right)  \notag
		\\
		&&,\phi \left( \max \{d(x_{n-1},x_{n+1})+\varphi (x_{n-1})+\varphi
		(x_{n+1}),d(x_{n-1},x_{n})+\varphi (x_{n-1})+\varphi (x_{n})\}\right) ). 
		\notag
	\end{eqnarray}
	Take $a_{n}=d\left( x_{n},x_{n+2}\right) +\varphi \left( x_{n}\right)
	+\varphi \left( x_{n+2}\right) $ and $b_{n}=d(x_{n},x_{n+1})+\varphi
	(x_{n})+\varphi (x_{n+1}).$\newline
	Then by (\ref{3.11}), one can write 
	\begin{align*}
		\psi \left( a_{n}\right) & \leq F(\psi \left( \max \left(
		a_{n-1},b_{n-1}\right) \right) ,\phi \left( \max \left(
		a_{n-1},b_{n-1}\right) \right) ) \\
		& \leq \psi \left( \max \left( a_{n-1},b_{n-1}\right) \right) .
	\end{align*}
	Since $\psi $ is increasing, we get 
	\begin{equation*}
		a_{n}\leq \max \left\{ a_{n-1},b_{n-1}\right\} . \\
	\end{equation*}
	By (\ref{3.5}), we have 
	\begin{equation*}
		b_{n}\leq b_{n-1}\leq \max \left\{ a_{n-1},b_{n-1}\right\} ,
	\end{equation*}
	which implies that 
	\begin{equation*}
		\max \left\{ a_{n},b_{n}\right\} \leq \max \left\{ a_{n-1},b_{n-1}\right\} , 
		\text{ }\forall n\in \mathbb{N}. \\
	\end{equation*}
	Therefore, the sequence $\max \left\{ a_{n-1},b_{n-1}\right\} _{n\in \mathbb{%
			\ N}}$ is a nonnegative decreasing sequence of real numbers. Thus there
	exists $\lambda \geq 0$ such that 
	\begin{equation*}
		\lim_{n\rightarrow +\infty }\max \left\{ a_{n},b_{n}\right\} =\lambda .
	\end{equation*}
	Assume that $\lambda >0$. By (\ref{3.8}), it is obvious that 
	\begin{equation}
		\lambda =\lim_{n\rightarrow +\infty }\sup a_{n}=\lim_{n\rightarrow +\infty
		}\sup \max \left\{ a_{n},b_{n}\right\} =\lim_{n\rightarrow +\infty }\max
		\left\{ a_{n},b_{n}\right\} .  \label{3.12}
	\end{equation}
	Taking $\limsup_{n}\rightarrow +\infty $ in (\ref{3.11}), using (\ref{3.12})
	and using the properties of $\psi $ and $\phi $, we obtain 
	\begin{align*}
		\psi (\lambda )& =\psi \left( \limsup_{n\rightarrow +\infty }a_{n}\right) \\
		& =\limsup_{n\rightarrow +\infty }\psi \left( a_{n}\right) \\
		& \leq \limsup_{n\rightarrow +\infty }\psi \left( \max \left\{
		a_{n},b_{n}\right\} \right) -\liminf_{n\rightarrow +\infty }\phi \left( \max
		\left\{ a_{n},b_{n}\right\} \right) \\
		& \leq F(\psi \left( \lim_{n\rightarrow +\infty }\max \left\{
		a_{n},b_{n}\right\} \right) ,\phi \left( \lim_{n\rightarrow +\infty }\max
		\left\{ a_{n},b_{n}\right\} \right) ) \\
		& =F(\psi (\lambda ),\phi (\lambda )),
	\end{align*}
	which implies that $\psi \left( r\right) =0,$ $\ $or$\ \ \ ,\phi \left(
	r\right) =0$, a contradiction. Thus, from (\ref{3.12}), 
	\begin{equation*}
		\limsup_{n\rightarrow +\infty }a_{n}=0
	\end{equation*}
	and hence 
	\begin{equation*}
		\lim_{n\rightarrow +\infty }d\left( x_{n,}x_{n+2}\right) =0.
	\end{equation*}
	
	Next, we shall prove that $\left\lbrace x_{n}\right\rbrace _{n\in \mathbb{N}%
	} $ is a Cauchy sequence, i.e, $\lim_{n,m\rightarrow +\infty }d\left(
	x_{n,}x_{m}\right) =0$ for all $n,m\in \mathbb{N}$. Suppose to the contrary.
	By Lemma $\ref{2?4}$, there is an $\varepsilon $ $>0$ such that for an
	integer $k $ there exist two sequences $\left\lbrace n_{\left( k\right)
	}\right\rbrace $ and $\left\lbrace m_{\left( k\right) }\right\rbrace $ such
	that
	
	\begin{itemize}
		\item[i)] $\varepsilon \leq \lim_{k\rightarrow +\infty }\inf d\left(
		x_{m_{\left( k\right) }},x_{n_{\left( k\right)}}\right) \leq
		\lim_{k\rightarrow +\infty }\sup d\left( x_{m_{\left( k\right)
		}},x_{n_{\left( k\right)}}\right)\leq s\varepsilon ,$
		
		\item[ii)] $\varepsilon \leq \lim_{k\rightarrow +\infty }\inf d\left(
		x_{n_{\left( k\right) }},x_{m_{\left( k\right)}+1}\right) \leq
		\lim_{k\rightarrow +\infty }\sup d\left( x_{n_{\left( k\right)
		}},x_{m_{\left( k\right)}+1}\right)\leq s\varepsilon ,$
		
		\item[iii)] $\varepsilon \leq \lim_{k\rightarrow +\infty }\inf d\left(
		x_{m_{\left( k\right) }},x_{n_{\left( k\right)}+1}\right) \leq
		\lim_{k\rightarrow +\infty }\sup d\left( x_{m_{\left( k\right)
		}},x_{n_{\left( k\right)}+1}\right)\leq s\varepsilon ,$
		
		\item[vi)] $\frac{\varepsilon}{s} \leq \lim_{k\rightarrow +\infty }\inf
		d\left( x_{m_{\left( k\right) +1}},x_{n_{\left( k\right)}+1}\right) \leq
		\lim_{k\rightarrow +\infty }\sup d\left( x_{m_{\left(
				k\right)}+1},x_{n_{\left( k\right)}+1}\right)\leq s^{2}\varepsilon .$
	\end{itemize}
	
	From (\ref{3.1}) and by setting $x=x_{m_{\left( k\right) }}$ and $%
	y=x_{n_{\left( k\right) }}$, we have 
	\begin{eqnarray*}
		&&M(x_{m_{(k)}},x_{n_{(k)}},d,T,\varphi )=\max
		\{d(x_{m_{(k)}},x_{n_{(k)}})+\varphi (x_{m_{(k)}})+\varphi (x_{m_{(k)}}), \\
		&&\quad d(x_{m_{(k)}},x_{m_{(k)}+1})+\varphi (x_{m_{(k)}})+\varphi
		(x_{m_{(k)}+1}),d(x_{n_{(k)}},x_{n_{(k)}+1})+\varphi (x_{n_{(k)}})+\varphi
		(x_{n_{(k)}+1})\}.
	\end{eqnarray*}%
	Taking the limit as $k\rightarrow +\infty $ and using (\ref{3.8}), (\ref{3.9}
	) and $(iii)$ of Lemma \ref{2?4}, we have 
	\begin{equation}
		\lim_{k\rightarrow +\infty }M\left( x_{m_{\left( k\right) }},x_{n_{\left(
				k\right) }},d,T,\varphi \right) \leq s\varepsilon .  \label{3.15}
	\end{equation}%
	Now letting $x=x_{m_{\left( k\right) }}$ and $y=x_{n_{\left( k\right) }}$ in
	(\ref{3.1}), we have 
	\begin{multline*}
		\psi \left[ s^{2}d\left( x_{m_{\left( k\right) }+1},x_{n_{\left( k\right)
			}+1}\right) +\varphi \left( m_{\left( k\right) }+1\right) +\varphi \left(
		n_{\left( k\right) }+1\right) \right] \\
		\leq F(\psi \left[ d\left( x_{m_{\left( k\right) }+1},x_{n_{\left( k\right)
			}+1}\right) +\varphi \left( m_{\left( k\right) }+1\right) +\varphi \left(
		n_{\left( k\right) }+1\right) \right] \\
		,\phi \left[ d\left( x_{m_{\left( k\right) }},x_{n_{\left( k\right)
			}+1}\right) +\varphi \left( m_{\left( k\right) }\right) +\varphi \left(
		n_{\left( k\right) }\right) \right] ).
	\end{multline*}%
	Letting $k\rightarrow +\infty $, using (\ref{3.8}), (\ref{3.9}), (\ref{3.15}
	), and applying the continuity of $\psi $ and the lower semicontinuity of $%
	\phi $, we have 
	\begin{equation*}
		\lim_{k\rightarrow +\infty }\psi \left[ s^{2}d\left( x_{m_{\left( k\right)
			}+1},x_{n_{\left( k\right) }+1}\right) \right] \leq F(\psi (s\varepsilon
		),\phi (s\varepsilon )).
	\end{equation*}%
	Using (\ref{3.15}) and $(iv)$ of Lemma $\ref{2?4}$, we obtain 
	\begin{equation*}
		\psi (s\varepsilon )=\psi \left( s^{2}\frac{\varepsilon }{s}\right) \leq
		\lim_{k\rightarrow +\infty }\sup \psi \left[ s^{2}d\left( x_{m_{\left(
				k\right) }+1},x_{n_{\left( k\right) }+1}\right) \right] \leq F(\psi
		(s\varepsilon ),\phi (s\varepsilon )).
	\end{equation*}%
	This is a contradiction. Thus 
	\begin{equation*}
		\lim_{n,m\rightarrow +\infty }d\left( x_{m},x_{n}\right) =0.
	\end{equation*}%
	Hence $\left\{ x_{n}\right\} $ is a Cauchy sequence in $X$. By completeness
	of $\left( X,d\right) $, there exists $z\in X$ such that 
	\begin{equation*}
		\lim_{n\rightarrow +\infty }d\left( x_{n},z\right) =0.
	\end{equation*}%
	Since $\varphi $ is lower semicontinuous, we get 
	\begin{equation*}
		\varphi \left( z\right) \leq \liminf_{n\rightarrow +\infty }\varphi \left(
		x_{n}\right) \leq \lim_{n\rightarrow +\infty }\varphi \left( x_{n}\right) =0,
		\\
	\end{equation*}%
	which implies 
	\begin{eqnarray}
		\varphi \left( z\right) &=&0.  \label{3.17} \\
		&&  \notag
	\end{eqnarray}%
	Now, putting $x=x_{n}$ and $y=z$ in (\ref{3.1}), we have 
	\begin{eqnarray*}
		M\left( x_{n},z,d,T,\varphi \right) &=&\max \{d\left( x_{n},z\right)
		+\varphi \left( x_{n}\right) +\varphi \left( z\right) , \\
		&&d\left( x_{n},x_{n+1}\right) +\varphi \left( x_{n}\right) +\varphi \left(
		x_{n+1}\right) ,d\left( z,Tz\right) +\varphi \left( z\right) +\varphi \left(
		Tz\right) \}.
	\end{eqnarray*}%
	Taking the limit as $n\rightarrow +\infty $ and using (\ref{3.8}), (\ref{3.9}
	) and (\ref{3.17}), we have 
	\begin{equation*}
		\lim_{n\rightarrow +\infty }M\left( x_{n},z,d,T,\varphi \right) =d\left(
		z,Tz\right) +\varphi \left( Tz\right) .
	\end{equation*}%
	Since $x_{n}\rightarrow z$ as $n\rightarrow +\infty $, from Lemma \ref{2.3}
	, we conclude that 
	\begin{equation*}
		\frac{1}{s}d\left( z,Tz\right) \leq \lim_{n\rightarrow +\infty }\sup d\left(
		Tx_{n},Tz\right) \leq sd\left( z,Tz\right) .
	\end{equation*}%
	Hence 
	\begin{equation*}
		sd\left( z,Tz\right) =s^{2}\frac{1}{s}d\left( z,Tz\right) \leq
		\lim_{n\rightarrow +\infty }\sup s^{2}d\left( Tx_{n},Tz\right) ,
	\end{equation*}%
	which implies 
	\begin{equation*}
		\lim_{n\rightarrow +\infty }\sup \left[ sd\left( z,Tz\right) +\varphi \left(
		x_{n+1}\right) +\varphi \left( Tz\right) \right] \leq \lim_{n\rightarrow
			+\infty }\sup \left[ s^{2}d\left( Tx_{n},Tz\right) +\left( x_{n+1}\right)
		+\varphi \left( Tz\right) \right] .
	\end{equation*}%
	Then using (\ref{3.1}), we have 
	\begin{align*}
		\psi \left[ s^{2}d\left( Tx_{n},Tz\right) +\varphi \left( Tx_{n}\right)
		+\varphi \left( Tz\right) \right] & =\psi \left[ s^{2}d\left(
		x_{n+1},Tz\right) +\varphi \left( x_{n+1}\right) +\varphi \left( Tz\right) %
		\right] \\
		& \leq F(\psi \left[ M\left( x_{n},z,d,T,\varphi \right) \right] ,\phi \left[
		M\left( x_{n},z,d,T,\varphi \right) \right] ).
	\end{align*}%
	Letting $n\rightarrow +\infty $ and using the continuity of $\psi $ and the
	lower semicontinuity of $\phi $, we have 
	\begin{multline*}
		\psi \left[ \lim_{n\rightarrow +\infty }\sup \left( sd\left( z,Tz\right)
		+\varphi \left( x_{n+1}\right) +\varphi \left( Tz\right) \right) \right] \\
		\leq \psi \left[ \lim_{n\rightarrow +\infty }\sup \left( s^{2}d\left(
		Tx_{n},Tz\right) +\left( x_{n+1}\right) +\varphi \left( Tz\right) \right) %
		\right] \\
		\leq F(\psi \left[ \lim_{n\rightarrow +\infty }\sup M\left(
		x_{n},z,d,T,\varphi \right) \right] ,\lim_{n\rightarrow +\infty }\phi \left[
		M\left( x_{n},z,d,T,\varphi \right) \right] ),
	\end{multline*}%
	which implies 
	\begin{equation*}
		\psi \left[ sd\left( z,Tz\right) +\varphi \left( Tz\right) \right] \leq
		F(\psi \left[ d\left( z,Tz\right) +\varphi \left( Tz\right) \right] ,\phi %
		\left[ d\left( z,Tz\right) +\varphi \left( Tz\right) \right] ).
	\end{equation*}%
	This holds if and only if \ $\psi \left( d\left( z,Tz\right) +\varphi \left(
	Tz\right) \right) =0$\ \ or $\phi \left( d\left( z,Tz\right) +\varphi \left(
	Tz\right) \right) =0$ and from the property of $\psi ,\phi $, we have 
	\begin{equation*}
		d\left( z,Tz\right) +\varphi \left( Tz\right) =0.
	\end{equation*}%
	Hence $d\left( z,Tz\right) =0$ and so $z=Tz$ and $\varphi \left( Tz\right)
	=0 $. It is a contradiction to the assumption: that $T$ does not have a
	periodic point. Thus $T$ has a periodic point, say, $z$ of period $n$.
	Suppose that the set of fixed points of $T$ is empty. Then we have 
	\begin{equation*}
		q>0\ and\ d(z,Tz)>0.
	\end{equation*}%
	Since $T$ has a periodic point, $z=T^{n}z$. Letting $x=T^{n-1}z$ and $%
	y=T^{n}z$, we obtain 
	\begin{multline*}
		M\left( T^{n}z,T^{n-1}z,d,T,\varphi \right) =\max \{d\left(
		T^{n-1}z,T^{n}z\right) +\varphi \left( T^{n-1}z\right) +\varphi \left(
		T^{n}z\right) , \\
		d\left( T^{n-1}z,T^{n}z\right) +\varphi \left( T^{n-1}z\right) +\varphi
		\left( T^{n}z\right) ,d\left( T^{n}z,TT^{n}z\right) +\varphi \left(
		T^{n}z\right) +\varphi \left( TT^{n}z\right) \}.
	\end{multline*}
	
	By a similar method to (\ref{3.6}), we conclude that 
	\begin{equation*}
		M\left( T^{n}z,T^{n-1}z,d,T,\varphi \right) =d\left( T^{n-1}z,T^{n}z\right)
		+\varphi \left( T^{n-1}z\right) +\varphi \left( T^{n}z\right) .
	\end{equation*}
	From (\ref{3.1}), we have 
	\begin{align*}
		\psi \left[ s^{2}d\left( z,Tz\right) +\varphi \left( T^{n}z\right) +\varphi
		\left( T^{n+1}z\right) \right] & =\psi \left[ s^{2}d\left(
		T^{n}z,T^{n+1}z\right) +\varphi \left( T^{n}z\right) +\varphi \left(
		T^{n+1}z\right) \right] \\
		& \leq F(\psi \left[ d\left( T^{n-1}z,T^{n}z\right) +\varphi \left(
		T^{n-1}z\right) +\varphi \left( T^{n}z\right) \right] \\
		& ,\phi \left[ d\left( T^{n-1}z,T^{n}z\right) +\varphi \left(
		T^{n-1}z\right) +\varphi \left( T^{n}z\right) \right] ) \\
		& \leq \psi \left[ s^{2}d\left( T^{n-1}z,T^{n}z\right) +\varphi \left(
		T^{n-1}z\right) +\varphi \left( T^{n}z\right) \right] \\
		& \vdots \\
		& \leq F(\psi \left[ d\left( z,Tz\right) +\varphi \left( z\right) +\varphi
		\left( Tz\right) \right] \\
		& ,\phi \left[ d\left( z,Tz\right) +\varphi \left( z\right) +\varphi \left(
		Tz\right) \right] )
	\end{align*}
	Taking the limit as $n\rightarrow +\infty $ and applying the continuity of $%
	\psi $ and the lower semicontinuity of $\phi $, we have 
	\begin{equation*}
		\psi \left[ s^{2}d\left( z,Tz\right) \right] \leq F(\psi \left[ d\left(
		z,Tz\right) \right] ,\phi \left[ d\left( z,Tz\right) \right] ).
	\end{equation*}
	Hence $d(z,Tz)=0$, which is a contradiction. Thus the set of fixed points of
	T is non-empty, that is, $T$ has at least one fixed point.
	
	Suppose that $z,u\in X$ are two fixed points of $T$ such that $u\neq z$.
	Then $Tz=z $ and $Tu=u $.
	
	Letting $x=z$ and $y=u$ in (\ref{3.1}), we have 
	\begin{equation*}
		\psi \left( s^{2}d\left( Tz,Tu\right) +\varphi \left( Tz\right) +\varphi
		\left( Tu\right) \right) =\psi \left( s^{2}d\left( z,u\right) \right) \leq
		F(\psi \left( M\left( z,u,d,T,\varphi \right) \right) ,\phi \left( M\left(
		z,u,d,T,\varphi \right) \right) ),
	\end{equation*}
	where 
	\begin{align*}
		M(z,u,d,T,\varphi )& =\max \{d(z,u)+\varphi (z)+\varphi (u),d(z,Tz)+\varphi
		(z)+\varphi (Tz),d(u,Tu)+\varphi (u)+\varphi (Tu)\} \\
		& =d(z,u).
	\end{align*}
	So 
	\begin{equation*}
		\psi \left( s^{2}d\left( z,u\right) \right) \leq F(\psi \left( d\left(
		z,u\right) \right) ,\phi \left( d\left( z,u\right) \right) ).
	\end{equation*}
	This holds if $\phi \left( d\left( z,u\right) \right) =0$ and so we have $%
	(d\left( z,u\right) =0$. Hence $z=u$ and $T$ has a unique fixed point.
\end{proof}

\begin{corollary}
	\label{cor3.3} Let $(X,d)$ be a complete $b$-rectangular metric space and $%
	T:X\rightarrow X$ be a mapping. Suppose that there exists $k \in \left] 0,1 %
	\right[ $ such that for all $x,y\in X$, 
	\begin{multline*}
		s^{2}d\left( Tx,Ty\right)+ \varphi\left( Tx\right) + \varphi\left(
		Ty\right)\leq \\
		k \max\lbrace d\left( x,y\right)+\varphi\left( x\right)+\varphi\left(
		y\right),d\left( x,Tx\right)+\varphi\left( x\right)+ \varphi\left(
		Tx\right),d\left( y,Ty\right)+\varphi\left( y\right)+\varphi\left(
		Ty\right)\rbrace,
	\end{multline*}
	where $\varphi : \mathbb{R^{+}}\rightarrow \mathbb{R^{+}}$ is a lower
	semicontinuous function. Then $T$ has a unique fixed point.
\end{corollary}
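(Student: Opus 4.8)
The plan is to deduce this corollary directly from Theorem \ref{thm3.2} by exhibiting an admissible triple $(\psi,\phi,F)$ for which the $C$-class--$F$--generalized weakly contractive condition of Definition \ref{3.1} collapses onto the stated inequality. Concretely, I would take $\psi(t)=t$ and note that $\psi\in\Phi$, since it is continuous, nondecreasing, and vanishes only at $0$. For the control function I would pick any $\phi\in\Phi_{u}$, for instance $\phi(t)=t$, which is continuous with $\phi(t)>0$ for $t>0$ and $\phi(0)=0$. The decisive choice is the $C$-class function $F(s,t)=ks$ with the given constant $k\in\left]0,1\right[$; by item (2) of Example \ref{C-class examp} this $F$ belongs to $\mathcal{C}$, because it is continuous, satisfies $F(s,t)=ks\le s$, and $F(s,t)=s$ forces $s=0$.

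With these choices the left-hand side of the defining inequality of Definition \ref{3.1} reduces to $s^{2}d(Tx,Ty)+\varphi(Tx)+\varphi(Ty)$, since $\psi$ is the identity, while the right-hand side reduces to $F(\psi(M),\phi(M))=k\,M$, where $M=M(x,y,d,T,\varphi)$ is exactly the three-term maximum occurring in the corollary. Hence the hypothesis of the corollary,
\[
s^{2}d(Tx,Ty)+\varphi(Tx)+\varphi(Ty)\le k\,M(x,y,d,T,\varphi),
\]
is precisely the assertion that $T$ is a $C$-class--$F$--generalized weakly contractive mapping for this triple. Since $(X,d)$ is a complete $b$-rectangular metric space with $s\ge 1$ and $\varphi$ is lower semicontinuous, every hypothesis of Theorem \ref{thm3.2} is satisfied.

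Applying Theorem \ref{thm3.2} then produces a unique $z\in X$ with $z=Tz$ and $\varphi(z)=0$, which in particular yields the unique fixed point asserted here. I do not anticipate any genuine obstacle: the only points requiring attention are checking that $\psi(t)=t$ lies in $\Phi$ and that $F(s,t)=ks$ lies in $\mathcal{C}$, both immediate from the definitions, together with the harmless observation that $F(s,t)=ks$ ignores its second argument, so that the particular $\phi\in\Phi_{u}$ chosen is irrelevant to the reduction.
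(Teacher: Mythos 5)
Your proposal is correct and follows essentially the same route as the paper, whose entire proof is the one-line instruction to take $\psi(t)=t$ and $F(s,t)=ks$ in Theorem \ref{thm3.2}. The additional details you supply (checking $\psi\in\Phi$, $F\in\mathcal{C}$, and that the choice of $\phi\in\Phi_{u}$ is immaterial because $F$ ignores its second argument) are exactly the routine verifications the paper leaves implicit.
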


\begin{proof}
	It suffices to take Take $\psi (t)=t$ and $F(s,t)=ks$ in Theorem $\ref%
	{thm3.2}$.
\end{proof}

\begin{corollary}
	Let $(X,d)$ be a complete $b$-rectangular metric space and $T:X\rightarrow X$
	be a mapping. Suppose that there exists $\alpha \in \left] 0,\frac{1}{2} %
	\right[ $ such that for all $x,y\in X$, 
	\begin{equation}  \label{3.24}
		s^{2}d\left( Tx,Ty\right)+ \varphi\left( Tx\right) + \varphi\left( Ty\right)
		\leq\alpha \left[\left( d\left( Tx,x\right)+\varphi\left( x\right)+
		\varphi\left( Tx\right) +d+\varphi\left( y\right)+\varphi\left(
		Ty\right)+\left(Ty,y\right)\right) \right],
	\end{equation}
	where $\varphi : \mathbb{R^{+}}\rightarrow \mathbb{R^{+}}$ is a lower
	semicontinuous function. Then $T$ has a unique fixed point.
\end{corollary}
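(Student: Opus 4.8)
The plan is to recognize the hypothesis as a Kannan-type special case and deduce the statement directly from Theorem~\ref{thm3.2}, exactly in the spirit of Corollary~\ref{cor3.3}, by specializing the altering distance function and the $C$-class function. Reading the right-hand side of \eqref{3.24} in its intended form
\[
\alpha\bigl[\bigl(d(x,Tx)+\varphi(x)+\varphi(Tx)\bigr)+\bigl(d(y,Ty)+\varphi(y)+\varphi(Ty)\bigr)\bigr],
\]
the first thing I would note is that each of the two bracketed summands is one of the three quantities over which the maximum defining $M(x,y,d,T,\varphi)$ is taken, so that both
\[
d(x,Tx)+\varphi(x)+\varphi(Tx)\le M(x,y,d,T,\varphi)
\quad\text{and}\quad
d(y,Ty)+\varphi(y)+\varphi(Ty)\le M(x,y,d,T,\varphi).
\]

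Adding these two inequalities and multiplying by $\alpha$ then gives
\[
s^{2}d(Tx,Ty)+\varphi(Tx)+\varphi(Ty)\le 2\alpha\,M(x,y,d,T,\varphi).
\]
Since $\alpha\in\left]0,\tfrac12\right[$, the constant $k:=2\alpha$ lies in $\left]0,1\right[$, so this last bound is precisely the contractive hypothesis of Corollary~\ref{cor3.3}. Invoking that corollary immediately yields a unique fixed point. If one prefers to argue from Theorem~\ref{thm3.2} directly, I would instead take $\psi(t)=t$ (an altering distance function, hence $\psi\in\Phi$) and $F(s,t)=ks$ (which belongs to $\mathcal{C}$ by item~(2) of Example~\ref{C-class examp}); the displayed estimate then reads exactly
\[
\psi\bigl(s^{2}d(Tx,Ty)+\varphi(Tx)+\varphi(Ty)\bigr)\le F\bigl(\psi(M(x,y,d,T,\varphi)),\phi(M(x,y,d,T,\varphi))\bigr),
\]
so $T$ is a $C$-class-$F$-generalized weakly contractive mapping in the sense of Definition~\ref{3.1}, and Theorem~\ref{thm3.2} produces the unique fixed point $z$ with $z=Tz$ and $\varphi(z)=0$.

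I do not expect any genuine obstacle here: the entire content is the elementary observation that a Kannan-type sum of two terms, each dominated by the maximum $M$, is bounded by $2M$. This is exactly what forces the restriction $\alpha<\tfrac12$, which is what converts the Kannan constant into an admissible contraction constant $k=2\alpha<1$ and lets the machinery of Theorem~\ref{thm3.2} take over. The only care needed is the (tacit) correction that $\varphi$ is a lower semicontinuous function on $X$ rather than on $\mathbb{R}^{+}$, so that the hypotheses of Theorem~\ref{thm3.2} are literally met.
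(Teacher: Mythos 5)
Your proof is correct and takes essentially the same route as the paper: the paper also sets $k=2\alpha\in\left]0,1\right[$, bounds the Kannan-type sum by $k\,M(x,y,d,T,\varphi)$ (phrased there via the average-versus-maximum inequality, which is equivalent to your bound of each summand by $M$), and then invokes Corollary~\ref{cor3.3}, i.e.\ Theorem~\ref{thm3.2} with $\psi(t)=t$ and $F(s,t)=ks$. Your remarks on the typographical corrections to \eqref{3.24} and on $\varphi$ being lower semicontinuous on $X$ rather than on $\mathbb{R}^{+}$ are also consistent with how the paper actually uses the hypothesis.
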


\begin{proof}
	Let $k=2\alpha$. Then $k\in\left] 0,1\right[ $. Also, if (\ref{3.24}) holds,
	then 
	\begin{multline*}
		s^{2}d\left( Tx,Ty\right)+ \varphi\left( Tx\right) + \varphi\left( Ty\right)
		\leq\alpha \left[ d\left( Tx,x\right)+\varphi\left( x\right)+ \varphi\left(
		Tx\right) +d+\varphi\left( y\right)+\varphi\left(
		Ty\right)+\left(Ty,y\right) \right] \\
		=k \frac{ \left[ d\left( Tx,x\right)+\varphi\left( x\right)+ \varphi\left(
			Tx\right) +d+\varphi\left( y\right)+\varphi\left(
			Ty\right)+\left(Ty,y\right) \right]}{2} \\
		\leq k \max\lbrace d\left( x,Tx\right)+\varphi\left( x\right)+ \varphi\left(
		Tx\right),d\left( y,Ty\right)+\varphi\left( y\right)+\varphi\left(
		Ty\right)\rbrace \\
		\leq k \max\lbrace d\left( x,y\right)+\varphi\left( x\right)+\varphi\left(
		y\right),d\left( x,Tx\right)+\varphi\left( x\right)+ \varphi\left(
		Tx\right),d\left( y,Ty\right)+\varphi\left( y\right)+\varphi\left(
		Ty\right)\rbrace.
	\end{multline*}
	Thus it suffices to apply Corollary $\ref{cor3.3}$.
\end{proof}

\begin{corollary}
	Let $(X,d)$ be a complete $b$-rectangular metric space and $T:X\rightarrow X$
	be a mapping. Suppose that there exists $\lambda \in \left] 0,\frac{1}{3} %
	\right[ $ such that for all $x,y\in X$, 
	\begin{multline}  \label{3.25}
		s^{2}d\left( Tx,Ty\right)+ \varphi\left( Tx\right) + \varphi\left( Ty\right)
		\leq \\
		\lambda\left[d\left( x,y\right)+\varphi\left( x\right)+ \varphi\left(
		y\right)+ d\left( Tx,x\right)+\varphi\left( x\right)+ \varphi\left(
		Tx\right) +\varphi\left( y\right)+\varphi\left( Ty\right)+d\left(Ty,y\right) %
		\right],
	\end{multline}
	where $\varphi : \mathbb{R^{+}}\rightarrow \mathbb{R^{+}}$ is a lower
	semicontinuous function. Then $T$ has a unique fixed point.
\end{corollary}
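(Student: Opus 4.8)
The plan is to reduce this corollary to Corollary \ref{cor3.3} by the very same device used for the two preceding corollaries: choose the constant so that the right-hand side is dominated by a multiple of the maximum $M(x,y,d,T,\varphi)$. Concretely, I would set $k=3\lambda$. Since $\lambda\in\left]0,\tfrac13\right[$, this gives $k\in\left]0,1\right[$, which is exactly the range of constants admitted by Corollary \ref{cor3.3}.

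First I would reorganize the bracketed expression on the right of (\ref{3.25}). Reading off its terms and using the symmetry $d(Tx,x)=d(x,Tx)$ and $d(Ty,y)=d(y,Ty)$, the bracket is precisely the sum of the three quantities
\[
B_1=d(x,y)+\varphi(x)+\varphi(y),\qquad B_2=d(x,Tx)+\varphi(x)+\varphi(Tx),\qquad B_3=d(y,Ty)+\varphi(y)+\varphi(Ty),
\]
which are exactly the three arguments of the maximum defining $M(x,y,d,T,\varphi)$ in Definition \ref{3.1}.

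The key step is then the elementary bound $B_1+B_2+B_3\le 3\max\{B_1,B_2,B_3\}=3\,M(x,y,d,T,\varphi)$, valid because each $B_i$ is nonnegative and bounded above by the maximum. Combining this with hypothesis (\ref{3.25}) yields
\[
s^{2}d(Tx,Ty)+\varphi(Tx)+\varphi(Ty)\le \lambda\,(B_1+B_2+B_3)\le 3\lambda\,M(x,y,d,T,\varphi)=k\,M(x,y,d,T,\varphi),
\]
which is precisely the contractive inequality assumed in Corollary \ref{cor3.3} with the constant $k=3\lambda\in\left]0,1\right[$.

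Finally I would invoke Corollary \ref{cor3.3} to conclude that $T$ has a unique fixed point. I do not anticipate any genuine obstacle: the only point demanding care is the bookkeeping involved in parsing the somewhat garbled right-hand side of (\ref{3.25}) and confirming that its bracket really is the sum $B_1+B_2+B_3$ rather than some other grouping of the terms; once that identification is secured, the remaining estimate and the appeal to Corollary \ref{cor3.3} are entirely routine.
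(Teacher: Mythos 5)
Your proposal is correct and follows essentially the same route as the paper: both set $k=3\lambda$, identify the bracket in (\ref{3.25}) as the sum of the three terms appearing in the maximum defining $M(x,y,d,T,\varphi)$, bound that sum by $3\,M(x,y,d,T,\varphi)$ (the paper phrases this as the average being at most the maximum, which is the same inequality), and then invoke Corollary \ref{cor3.3}. No gaps.
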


\begin{proof}
	Let $k=3\lambda$. Then $k\in\left] 0,1\right[ $. Also, if (\ref{3.25})
	holds, then 
	\begin{multline*}
		s^{2}d\left( Tx,Ty\right)+ \varphi\left( Tx\right) + \varphi\left( Ty\right)
		\\
		\leq\lambda \left[d\left( x,y\right)+\varphi\left( x\right)+ \varphi\left(
		y\right)+ d\left( Tx,x\right)+\varphi\left( x\right)+ \varphi\left(
		Tx\right) +\varphi\left( y\right)+\varphi\left( Ty\right)+d\left(Ty,y\right) %
		\right] \\
		=k \frac{ \left[ d\left( x,y\right)+\varphi\left( x\right)+ \varphi\left(
			y\right)+d\left( Tx,x\right)+\varphi\left( x\right)+ \varphi\left( Tx\right)
			+\varphi\left( y\right)+\varphi\left( Ty\right)+d\left(Ty,y\right) \right]}{%
			3 } \\
		\leq k \max\lbrace d\left( x,y\right)+\varphi\left( x\right)+\varphi\left(
		y\right),d\left( x,Tx\right)+\varphi\left( x\right)+ \varphi\left(
		Tx\right),d\left( y,Ty\right)+\varphi\left( y\right)+\varphi\left(
		Ty\right)\rbrace.
	\end{multline*}
	Thus it suffices to apply Corollary \ref{cor3.3}.
\end{proof}

\begin{corollary}
	Let $d\left( X,d\right) $ be a complete $b$-rectangular metric space with
	parameter $s>1$ and $T$ be a self mapping on $X$. If there exists $k \in %
	\left] 0,1\right[ $ such that for all $x,y\in X$, 
	\begin{multline*}
		s^{2} d\left( Tx,Ty\right)+\varphi\left( Tx\right)+\varphi\left( Tx\right)
		\leq \\
		k\left[ \beta_{1}\left( d\left( x,y\right)+\varphi\left(
		x\right)+\varphi\left( y\right) \right) +\beta _{2}\left( d\left(
		Tx,x\right)+\varphi\left( x\right)+\varphi\left( Tx\right) \right) +\beta
		_{3}\left( d\left(Ty,y\right)+\varphi\left( y\right)+\varphi\left(
		Ty\right)\right)\right],
	\end{multline*}
	where $\beta _{i}\geq0$ for $i\in\{1,2,3\},$ $\sum\limits_{\substack{ i=0 }}
	^{i=3} {\beta_i}\leq1$, $\varphi $ is a lower semicontinuous function. Then $%
	T$ has a unique fixed point.
\end{corollary}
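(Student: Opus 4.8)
The plan is to reduce this statement to Corollary~\ref{cor3.3}, mirroring the strategy used in the two preceding corollaries, where a prescribed weighted upper bound is shown to be dominated by $k\,M(x,y,d,T,\varphi)$. First I would introduce the shorthand
\begin{equation*}
A=d(x,y)+\varphi(x)+\varphi(y),\quad B=d(x,Tx)+\varphi(x)+\varphi(Tx),\quad C=d(y,Ty)+\varphi(y)+\varphi(Ty),
\end{equation*}
so that by the very definition of $M$ one has $M(x,y,d,T,\varphi)=\max\{A,B,C\}$ for all $x,y\in X$.

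The decisive step is the elementary bound for a sub-convex combination: since each $\beta_i\ge 0$ and $\beta_1+\beta_2+\beta_3\le 1$,
\begin{equation*}
\beta_1 A+\beta_2 B+\beta_3 C\le(\beta_1+\beta_2+\beta_3)\max\{A,B,C\}\le\max\{A,B,C\}=M(x,y,d,T,\varphi).
\end{equation*}
Substituting this into the standing hypothesis gives
\begin{equation*}
s^{2}d(Tx,Ty)+\varphi(Tx)+\varphi(Ty)\le k\bigl[\beta_1 A+\beta_2 B+\beta_3 C\bigr]\le k\,M(x,y,d,T,\varphi),
\end{equation*}
which is precisely the contractive condition required in Corollary~\ref{cor3.3} with the same constant $k\in\,]0,1[$. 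Invoking that corollary then yields the existence and uniqueness of the fixed point, completing the argument.

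Since the reduction is purely algebraic, there is no genuine analytic obstacle here: the entire content is the passage from the weighted sum to the maximum, which rests only on the nonnegativity of the weights together with the normalization $\sum_i\beta_i\le 1$. I would remark in passing that the printed hypothesis contains a typographical repetition of $\varphi(Tx)$ on its left-hand side, which I read as $\varphi(Tx)+\varphi(Ty)$ to match the left-hand side of Corollary~\ref{cor3.3}, and I interpret the index range $\sum_{i=0}^{3}$ as $\sum_{i=1}^{3}$ since no weight $\beta_{0}$ is introduced; with these readings the reduction above is immediate.
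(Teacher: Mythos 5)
Your proposal is correct and follows essentially the same route as the paper: a reduction to Corollary~\ref{cor3.3} with the same constant $k$. The paper's own proof is terser (it merely says to take $\psi(t)=t$, $F(s,t)=ks$ and apply Corollary~\ref{cor3.3}, leaving the key inequality implicit), whereas you spell out the sub-convex combination bound $\beta_1 A+\beta_2 B+\beta_3 C\leq \max\{A,B,C\}$ and sensibly flag the typographical issues; this is the same argument, only written out in full.
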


\begin{proof}
	Take $\psi (t)=t$ and $F(s,t)=ks$. Then it suffices to apply Corollary \ref%
	{cor3.3}.
\end{proof}

\begin{corollary}
	\label{cor2022+94+1} Let $(X,d)$ be a complete $b$-rectangular metric space
	and $T:X\rightarrow X$ be a mapping. Suppose that for all $x,y\in X$, 
	\begin{equation}
		\psi \left( s^{2}d\left( Tx,Ty\right) +\varphi \left( Tx\right) +\varphi
		\left( Ty\right) \right) \leq \psi (M\left( x,y,d,T,\varphi \right) )\log
		_{a+\phi \left( M\left( x,y,d,T,\varphi \right) \right) }a,  \label{2022+1}
	\end{equation}%
	where 
	\begin{equation*}
		M(x,y,d,T,\varphi )=\max \{d(x,y)+\varphi (x)+\varphi (y),d(x,Tx)+\varphi
		(x)+\varphi (Tx),d(y,Ty)+\varphi (y)+\varphi (Ty)\}
	\end{equation*}%
	and $\varphi :\mathbb{R^{+}}\rightarrow \mathbb{R^{+}}$ is a lower
	semicontinuous function also $\psi ,\varphi \in \Phi $, $a>1$\ . Then $T$
	has a unique fixed point.
\end{corollary}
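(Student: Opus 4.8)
The plan is to reduce this corollary directly to Theorem \ref{thm3.2} by recognizing the multiplicative factor $\log_{a+\phi(M)}a$ as the signature of a specific $C$-class function. First I would set
\[
F(s,t)=s\,\log_{a+t}a=s\,\frac{\ln a}{\ln(a+t)},\qquad s,t\in[0,\infty),
\]
so that the right-hand side of the contractive hypothesis (\ref{2022+1}) is exactly $F\bigl(\psi(M(x,y,d,T,\varphi)),\phi(M(x,y,d,T,\varphi))\bigr)$.

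Next I would verify that $F\in\mathcal{C}$; this is precisely item (7) of Example \ref{C-class examp}. For completeness: since $a>1$ we have $a+t\geq a>1$, hence $\ln(a+t)>0$ and $F$ is continuous on $[0,\infty)^2$. The inequality $F(s,t)\leq s$ reduces to $\ln a\leq\ln(a+t)$, which holds for every $t\geq0$; and $F(s,t)=s$ forces $s\bigl(\log_{a+t}a-1\bigr)=0$, whence either $s=0$ or $\log_{a+t}a=1$, the latter giving $a+t=a$, i.e. $t=0$. Thus $F$ satisfies both axioms of Definition \ref{C-class}.

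With $F$ so chosen, the hypothesis (\ref{2022+1}) reads
\[
\psi\bigl(s^{2}d(Tx,Ty)+\varphi(Tx)+\varphi(Ty)\bigr)\leq F\bigl(\psi(M(x,y,d,T,\varphi)),\phi(M(x,y,d,T,\varphi))\bigr)
\]
for all $x,y\in X$, which is exactly the defining inequality of a $C$-class $-F-$generalized weakly contractive mapping in Definition \ref{3.1}, with $\psi\in\Phi$ and $\phi$ in the required class. Therefore Theorem \ref{thm3.2} applies verbatim and yields a unique $z\in X$ with $z=Tz$ (and $\varphi(z)=0$).

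Since the identification of $F$ is immediate from Example \ref{C-class examp}, I do not expect any real obstacle here; the only point requiring a line of care is confirming the two $C$-class axioms for this particular $F$, namely its continuity and the equality-forcing property, which I would dispatch exactly as above before invoking the main theorem.
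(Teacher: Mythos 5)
Your proposal is correct and follows exactly the paper's own route: the paper's proof is the one-line reduction ``take $F(s,t)=s\log_{t+a}a$, $a>1$, in Theorem \ref{thm3.2}'', which is precisely your choice of $F$ (item (7) of Example \ref{C-class examp}). Your explicit verification of the two $C$-class axioms for this $F$ is a welcome addition of detail that the paper delegates to the cited example, but it does not change the argument.
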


\begin{proof}
	It suffices to take $F(s,t)=s\log _{t+a}a$, $a>1$, in Theorem $\ref{thm3.2}$.
\end{proof}

\begin{corollary}
	\label{cor2022+94+1} Let $(X,d)$ be a complete $b$-rectangular metric space
	and $T:X\rightarrow X$ be a mapping. Suppose that for all $x,y\in X$, 
	\begin{equation}
		\psi \left( s^{2}d\left( Tx,Ty\right) +\varphi \left( Tx\right) +\varphi
		\left( Ty\right) \right) \leq \frac{\psi (M\left( x,y,d,T,\varphi \right) )}{%
			1+\phi \left( M\left( x,y,d,T,\varphi \right) \right) },  \label{2022+1}
	\end{equation}%
	where 
	\begin{equation*}
		M(x,y,d,T,\varphi )=\max \{d(x,y)+\varphi (x)+\varphi (y),d(x,Tx)+\varphi
		(x)+\varphi (Tx),d(y,Ty)+\varphi (y)+\varphi (Ty)\}
	\end{equation*}%
	and $\varphi :\mathbb{R^{+}}\rightarrow \mathbb{R^{+}}$ is a lower
	semicontinuous function also $\psi \in \Phi ,\phi \in \Phi _{u}$, \ . Then $%
	T $ has a unique fixed point.
\end{corollary}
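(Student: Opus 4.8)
The plan is to recognize the displayed inequality as a special case of the $C$-class $-F-$generalized weakly contractive condition from Definition~\ref{3.1}, and then to apply Theorem~\ref{thm3.2} verbatim. The key observation is that the quotient $\frac{\psi(M)}{1+\phi(M)}$ on the right-hand side is obtained from the $C$-class function
\[
F(s,t)=\frac{s}{1+t},\qquad (s,t)\in[0,\infty)^{2},
\]
which is precisely item~(3) of Example~\ref{C-class examp} in the case $r=1$. So first I would fix this particular $F$ and then simply read off that the hypothesis coincides with the defining inequality of Definition~\ref{3.1}.

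Before invoking the theorem, I would confirm that $F\in\mathcal{C}$. Continuity on $[0,\infty)^{2}$ is clear. Since $t\ge 0$ gives $1+t\ge 1$, we have $F(s,t)=\frac{s}{1+t}\le s$, which is axiom~(1). For axiom~(2), the equality $F(s,t)=s$ reads
\[
\frac{s}{1+t}=s\iff s\Bigl(1-\tfrac{1}{1+t}\Bigr)=0\iff s\,\tfrac{t}{1+t}=0,
\]
and since $\frac{t}{1+t}=0$ only when $t=0$, this forces either $s=0$ or $t=0$. Hence $F$ genuinely belongs to $\mathcal{C}$. With this $F$, and using that by hypothesis $\psi\in\Phi$ and $\phi\in\Phi_{u}$, the assumed inequality is exactly
\[
\psi\!\left(s^{2}d(Tx,Ty)+\varphi(Tx)+\varphi(Ty)\right)\le F\!\left(\psi(M(x,y,d,T,\varphi)),\,\phi(M(x,y,d,T,\varphi))\right),
\]
so $T$ is a $C$-class $-F-$generalized weakly contractive mapping in the sense of Definition~\ref{3.1}.

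Therefore all the hypotheses of Theorem~\ref{thm3.2} are met, and that theorem yields a unique $z\in X$ with $z=Tz$ and $\varphi(z)=0$, which is the desired conclusion. I do not expect any real obstacle here: the entire argument is a reduction to Theorem~\ref{thm3.2}, and the only point demanding genuine verification is the membership $F\in\mathcal{C}$ — in particular the implication in axiom~(2) — which follows immediately from the elementary computation above.
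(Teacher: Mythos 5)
Your proposal is correct and follows essentially the same route as the paper, which proves this corollary precisely by taking $F(s,t)=\frac{s}{1+t}$ in Theorem~\ref{thm3.2}. Your explicit check that this $F$ belongs to $\mathcal{C}$ (item (3) of Example~\ref{C-class examp} with $r=1$) is a detail the paper leaves implicit, but it is the same argument.
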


\begin{proof}
	It suffices to take $F(s,t)=\frac{s}{1+t}$, in Theorem $\ref{thm3.2}$.
\end{proof}

\begin{example}
	Let $X=A\cup B$, where $A=\left\{ 0,\frac{1}{5},\frac{1}{9},\frac{1}{16}%
	\right\} $ and $B=\left[ \frac{1}{2},1\right] $. Define $d:X\times
	X\rightarrow \left[ 0,+\infty \right[ $ as follows: 
	\begin{equation*}
		\left\{ \begin{aligned} d(x, y) &=d(y, x)\ for \ all \  x,y\in X;\\ d(x, y)
			&=0\Leftrightarrow y= x\\ \end{aligned}\right.
	\end{equation*}%
	and 
	\begin{equation*}
		\left\{ \begin{aligned} d\left( 0,\frac{1}{9}\right) =d\left(
			\frac{1}{5},\frac{1}{16}\right) =0,1\\ d\left(0,\frac{1}{5}\right) =d\left(
			\frac{1}{5},\frac{1}{9}\right) =0,5\\ d\left( 0,\frac{1}{16}\right) =d\left(
			\frac{1}{9},\frac{1}{16}\right)=0,05\\ d\left( x,y\right) =\left( \vert
			x-y\vert\right) ^{2} \ otherwise. \end{aligned}\right.
	\end{equation*}%
	Then $(X,d)$ is a $b$-rectangular metric space with coefficient $s=3$.
	However we have the following:
	
	\item[1)] $(X,d)$ is not a metric space, since $d\left( \frac{1}{5},\frac{1}{%
		9}\right) =0.5>0.15=d\left( \frac{1}{5},\frac{1}{16}\right) +d\left( \frac{1%
	}{16},\frac{1}{9}\right) $.
	
	\item[2)] $(X,d)$ is not a $b$-metric space for s=3, since $d\left( \frac{1}{%
		5},\frac{1}{9}\right) =0.5>0.45=3\left[ d\left( \frac{1}{5},\frac{1}{16}%
	\right) +d\left( \frac{1}{16},\frac{1}{9}\right) \right] $.
	
	\item[3)] $(X,d)$ is not a rectangular metric space, since $d\left( \frac{1}{%
		5},\frac{1}{9}\right) =0.5>0.25=d\left( \frac{1}{5},\frac{1}{16}\right)
	+d\left( \frac{1}{16},0\right) +d\left( 0,\frac{1}{9}\right) $.\newline
	Define a mapping $T:X\rightarrow X$ by 
	\begin{equation*}
		T(x)=\left\{ \begin{aligned} \frac{1}{16} & \ if \ x\in \left[\frac{1}{2},1
			\right]\\ 0 & \ if \ x\in A.\\ \end{aligned}\right.
	\end{equation*}%
	Then $T(x)\in X$ for all $x\in X$. Let 
	\begin{equation*}
		\varphi (t)=\left\{ \begin{aligned} t & \ if \ t\in \left[0,1 \right]\\ 2t &
			\ if \ t>1\\ \end{aligned}\right.
	\end{equation*}%
	\begin{equation*}
		\varphi (t)=\left\{ \begin{aligned} \frac{t}{16} & \ if \ t\in \left[0,1
			\right]\\ \frac{t}{8} & \ if \ t>1 \end{aligned}\right.
	\end{equation*}%
	and 
	\begin{equation*}
		\psi (t)=\frac{3t}{2}
	\end{equation*}%
	and 
	\begin{equation*}
		F(s,t)=s-t.
	\end{equation*}%
	Then $\psi $ is an altering distance function and $\varphi $ is a lower
	semicontinuous function and $\phi $ is a lower semicontinuous function such
	that $\psi (t)=0\Leftrightarrow t=0$, $\phi (t)=0\Leftrightarrow t=0$ and $%
	\varphi (t)=0\Leftrightarrow t=0$ and $F$ C-class function . \newline
	Consider the following possibilities:
	
	Case I: $x,y\in \left\{ 0,\frac{1}{5},\frac{1}{9},\frac{1}{16}\right\} .$
	
	Assume that $x\geq y.$ Then 
	\begin{equation*}
		\psi \left( s^{2}d(Tx,Ty)+\varphi (Tx)+\varphi (Ty)\right) =\psi \left(
		9.d(0,0)+\varphi (0)+\varphi ()\right) =\psi (0)=0.
	\end{equation*}%
	Also 
	\begin{equation*}
		d(x,y)+\varphi (x)+\varphi (y)=d(x,y)+x+y,
	\end{equation*}%
	\begin{equation*}
		d(x,Tx)+\varphi (x)+\varphi (Tx)=d(x,0)+x,
	\end{equation*}%
	\begin{equation*}
		d(y,Ty)+\varphi (y)+\varphi (Ty)=d(y,0)+y
	\end{equation*}%
	and 
	\begin{equation*}
		M(x,y,d,T,\varphi )=\max \{d(x,y)+x+y,d(x,0)+x,d(y,0)+y\}.
	\end{equation*}%
	Since $x\geq y$, we have 
	\begin{equation*}
		M(x,y,d,T,\varphi )=\max \{d(x,y)+x+y,d(x,0)+x\}.
	\end{equation*}%
	If 
	\begin{equation*}
		M(x,y,d,T,\varphi )=d(x,y)+x+y\geq \frac{1}{20},
	\end{equation*}%
	then 
	\begin{eqnarray*}
		\psi \left( M(x,y,d,T,\varphi )-\phi \left( M(x,y,d,T,\varphi )\right)
		\right) &=&\psi \left( d(x,y)+x+y)-\phi \left( d(x,y)+x+y\right) \right) \\
		&=&\frac{23}{16}\left( d(x,y)+x+y\right) \geq 0
	\end{eqnarray*}%
	and so 
	\begin{equation*}
		0=\psi \left( s^{2}d(Tx,Ty)+\varphi (Tx)+\varphi (Ty)\right) \leq \psi
		\left( M(x,y,d,T,\varphi )-\phi \left( M(x,y,d,T,\varphi )\right) \right) .
	\end{equation*}%
	If 
	\begin{equation*}
		M(x,y,d,T,\varphi )=d(x,0)+x\geq \frac{1}{20},
	\end{equation*}%
	then 
	\begin{eqnarray*}
		\psi \left( s^{2}d(Tx,Ty)+\varphi (Tx)+\varphi (Ty)\right) &\leq &\frac{3}{2}%
		\cdot \frac{1}{20}-\frac{1}{20}\cdot \frac{1}{16}=\frac{23}{320} \\
		&\leq &\psi \left( M(x,y,d,T,\varphi )-\phi \left( M(x,y,d,T,\varphi
		)\right) \right) .
	\end{eqnarray*}
	
	Assume that $x<y$. Then 
	\begin{equation*}
		M(x,y,d,T,\varphi )=\max \{d(x,y)+x+y,d(y,0)+y\}.
	\end{equation*}%
	If 
	\begin{equation*}
		M(x,y,d,T,\varphi )=d(x,y)+x+y\geq \frac{1}{20}+\frac{1}{16}=\frac{9}{80},
	\end{equation*}%
	then 
	\begin{eqnarray*}
		\psi \left( s^{2}d(Tx,Ty)+\varphi (Tx)+\varphi (Ty)\right) &\leq &\frac{3}{2}%
		\cdot \frac{9}{80}-\frac{1}{16}\cdot \frac{9}{80}=\frac{207}{1280} \\
		&\leq &\psi \left( M(x,y,d,T,\varphi )-\phi \left( M(x,y,d,T,\varphi
		)\right) \right) .
	\end{eqnarray*}%
	If 
	\begin{equation*}
		M(x,y,d,T,\varphi )=d(y,0)+y,
	\end{equation*}%
	then 
	\begin{equation*}
		d(y,0)+y\geq \frac{9}{80},
	\end{equation*}%
	since $x<y$ and $0<y.$ Thus 
	\begin{equation*}
		\psi \left( s^{2}d(Tx,Ty)+\varphi (Tx)+\varphi (Ty)\right) \leq \psi \left(
		M(x,y,d,T,\varphi )-\phi \left( M(x,y,d,T,\varphi )\right) \right) .
	\end{equation*}
	
	Case II: $x\in \left\{ 0,\frac{1}{5},\frac{1}{9},\frac{1}{16}\right\} $ and $%
	y\in \left[ \frac{1}{2},1\right] $. This implies $x<y$. Then 
	\begin{equation*}
		\psi \left( s^{2}\left( d\left( Tx,Ty\right) +\varphi (Tx)+\varphi
		(Ty)\right) \right) =\frac{3}{2}\left[ 9d\left( \frac{1}{16},0\right) +\frac{%
			1}{16}\right] =\frac{123}{160}.
	\end{equation*}%
	Also 
	\begin{equation*}
		d(x,y)+\varphi (x)+\varphi (y)=(x-y)^{2}+x+y,
	\end{equation*}%
	\begin{equation*}
		d(x,Tx)+\varphi (x)+\varphi (Tx)=d(x,0)+x
	\end{equation*}%
	\begin{equation*}
		d(y,Ty)+\varphi (y)+\varphi (Ty)=d\left( y,\frac{1}{16}\right) +y+\frac{1}{16%
		},
	\end{equation*}%
	and 
	\begin{equation*}
		M(x,y,d,T,\varphi )=\max \{(x-y)^{2}+x+y,d(x,0)+x,d\left( y,\frac{1}{16}%
		\right) +y+\frac{1}{16}\}
	\end{equation*}%
	Since $x<y$, we have 
	\begin{equation*}
		M(x,y,d,T,\varphi )=\max \{(x-y)^{2}+x+y,d\left( y,\frac{1}{16}\right) +y+%
		\frac{1}{16}\}.
	\end{equation*}%
	If 
	\begin{equation*}
		M(x,y,d,T,\varphi )=d(x-y)^{2}+x+y\geq \frac{1}{2}+\left( \frac{1}{2}-\frac{1%
		}{5}\right) ^{2}=\frac{59}{100},
	\end{equation*}%
	then 
	\begin{equation*}
		\psi \left( M(x,y,d,T,\varphi )\right) -\phi \left( M(x,y,d,T,\varphi
		)\right) =\frac{23}{16}\left( d(x,y)^{2}+x+y\right) \geq \frac{23}{16}\cdot 
		\frac{59}{100}=\frac{1357}{1600}\geq \frac{123}{160}.
	\end{equation*}%
	Then 
	\begin{equation*}
		\psi \left( s^{2}d(Tx,Ty)+\varphi (Tx)+\varphi (Ty)\right) \leq \psi \left(
		M(x,y,d,T,\varphi )-\phi \left( M(x,y,d,T,\varphi )\right) \right) .
	\end{equation*}%
	If 
	\begin{equation*}
		M(x,y,d,T,\varphi )=d\left( y,\frac{1}{16}\right) +y+\frac{1}{16}\geq \frac{1%
		}{2}+\frac{1}{16}+\left( \frac{1}{2}-\frac{1}{16}\right) ^{2}=\frac{193}{256}%
		,
	\end{equation*}%
	then 
	\begin{equation*}
		\psi \left( M(x,y,d,T,\varphi )\right) -\phi \left( M(x,y,d,T,\varphi
		)\right) =\frac{23}{16}d\left( y,\frac{1}{16}\right) +y+\frac{1}{16}\geq 
		\frac{23}{16}\cdot \frac{193}{256}=\frac{4439}{4096}\geq \frac{123}{160}.
	\end{equation*}%
	Then 
	\begin{equation*}
		\psi \left( s^{2}d(Tx,Ty)+\varphi (Tx)+\varphi (Ty)\right) \leq \psi \left(
		M(x,y,d,T,\varphi )-\phi \left( M(x,y,d,T,\varphi )\right) \right) .
	\end{equation*}
	
	Case III: $y\in \left\{ 0,\frac{1}{5},\frac{1}{9},\frac{1}{16}\right\} $ and 
	$x\in \left[ \frac{1}{2},1\right] .$
	
	By a similar method to Case II, we deduce that 
	\begin{equation*}
		\psi \left( s^{2}d(Tx,Ty)+\varphi (Tx)+\varphi (Ty)\right) \leq \psi \left(
		M(x,y,d,T,\varphi )-\phi \left( M(x,y,d,T,\varphi )\right) \right) .
	\end{equation*}
	
	Case IV: $x,y\in \left[ \frac{1}{2},1\right] $.
	
	If $x\geq y$, then 
	\begin{equation*}
		\psi \left( s^{2}d(Tx,Ty)+\varphi (Tx)+\varphi (Ty)\right) =\psi \left(
		9d\left( \frac{1}{16},\frac{1}{16}\right) +\varphi \left( \frac{1}{16}%
		\right) +\varphi \left( \frac{1}{16}\right) \right) =\frac{3}{16}.
	\end{equation*}%
	Also 
	\begin{equation*}
		d(x,y)+\varphi (x)+\varphi (y)=\left( x-y\right) ^{2}+x+y,
	\end{equation*}%
	\begin{equation*}
		d(x,Tx)+\varphi (x)+\varphi (Tx)=d\left( x,\frac{1}{16}\right) +x+\frac{1}{16%
		},
	\end{equation*}%
	\begin{equation*}
		d(y,Ty)+\varphi (y)+\varphi (Ty)=d\left( y,\frac{1}{16}\right) +y+\frac{1}{16%
		}
	\end{equation*}%
	and 
	\begin{equation*}
		M(x,y,d,T,\varphi )=\max \{\left( x-y\right) ^{2}+x+y,d\left( x,\frac{1}{16}%
		\right) +x+\frac{1}{16},d\left( y,\frac{1}{16}\right) +y+\frac{1}{16}\}.
	\end{equation*}%
	Since $x\geq y$, we have 
	\begin{equation*}
		M(x,y,d,T,\varphi )=\max \{\left( x-y\right) ^{2}+x+y,d\left( x,\frac{1}{16}%
		\right) +x+\frac{1}{16}\}.
	\end{equation*}%
	If 
	\begin{equation*}
		M(x,y,d,T,\varphi )=\left( x-y\right) ^{2}+x+y\geq 1,
	\end{equation*}%
	then 
	\begin{equation*}
		\frac{3}{16}\leq \frac{23}{16}\leq \psi \left( M(x,y,d,T,\varphi )-\phi
		\left( M(x,y,d,T,\varphi )\right) \right) .
	\end{equation*}%
	If 
	\begin{equation*}
		M(x,y,d,T,\varphi )=d\left( x,\frac{1}{16}\right) +x+\frac{1}{16}\geq
		d\left( \frac{1}{2},\frac{1}{16}\right) +\frac{1}{2}+\frac{1}{16}=\frac{193}{%
			256},
	\end{equation*}%
	then 
	\begin{equation*}
		\frac{3}{16}\leq \frac{23}{16}\cdot \frac{193}{256}=\frac{4439}{4096}\leq
		\psi \left( M(x,y,d,T,\varphi )-\phi \left( M(x,y,d,T,\varphi )\right)
		\right) .
	\end{equation*}%
	If $x,y\in A$ and $x<y$, then 
	\begin{equation*}
		M(x,y,d,T,\varphi )=\max \{\left( x-y\right) ^{2}+x+y,d\left( y,\frac{1}{16}%
		\right) +y\}.
	\end{equation*}%
	By a similar method to the condition $x\geq y$, we have 
	\begin{equation*}
		\frac{3}{16}\leq \frac{4439}{4096}\leq \psi \left( M(x,y,d,T,\varphi )-\phi
		\left( M(x,y,d,T,\varphi )\right) \right) .
	\end{equation*}%
	Hence 
	\begin{equation}
		\psi \left( s^{2}d\left( Tx,Ty\right) +\varphi \left( Tx\right) +\varphi
		\left( Ty\right) \right) \leq F(\psi \left( M\left( x,y,d,T,\varphi \right)
		),\phi \left( M\left( x,y,d,T,\varphi \right) \right) \right) ,
	\end{equation}%
	Thus all the conditions of Theorem \ref{thm3.2} are satisfied and 0 is the
	unique fixed point of $T$.
\end{example}

\begin{definition}
	\cite{aha}A tripled $(\psi ,\varphi ,F)$ where $\psi \in \Psi ,$ $\varphi
	\in \Phi _{u}$ and $F\in \mathcal{C}$ is say to be monotone if for any $%
	x,y\in \left[ 0,\infty \right) $%
	\begin{equation*}
		x\leqslant y\Longrightarrow F(\psi (x),\varphi (x))\leqslant F(\psi
		(y),\varphi (y)).
	\end{equation*}
\end{definition}

\begin{example}
	\cite{aha}let $F(s,t)=s-t,\phi (x)=\sqrt{x}$\textrm{%
		\begin{equation*}
			\psi (x)=%
			\begin{cases}
				\sqrt{x} & \text{if }0\leq x\leq 1, \\ 
				x^{2}, & \text{if x}>1%
			\end{cases}%
			,
		\end{equation*}%
	}then $(\psi ,\phi ,F)$ is monotone.
\end{example}

\begin{example}
	\cite{aha}let $F(s,t)=s-t,\phi (x)=x^{2}$\textrm{%
		\begin{equation*}
			\psi (x)=%
			\begin{cases}
				\sqrt{x} & \text{if }0\leq x\leq 1, \\ 
				x^{2}, & \text{if x}>1%
			\end{cases}%
			,
		\end{equation*}%
	}then $(\psi ,\phi ,F)$ is not monotone.
\end{example}

\begin{definition}
	\label{3.1+2022} Let $X$ be a complete $b$-rectangular metric space with
	metric $d$ and parameter $s$ and $T:X\rightarrow X$. Also let $\varphi
	:X\rightarrow R^{+}$ be a lower semicontinuous function. Then $T$ is called
	a convex $C$ class $-F-$generalized weakly contractive mapping if it
	satisfies the following condition: 
	\begin{equation}
		\psi \left( s^{2}d\left( Tx,Ty\right) +\varphi \left( Tx\right) +\varphi
		\left( Ty\right) \right) \leq F(\psi \left( M\left( x,y,d,T,\varphi \right)
		),\phi \left( M\left( x,y,d,T,\varphi \right) \right) \right) ,
	\end{equation}%
	where 
	\begin{equation*}
		M(x,y,d,T,\varphi )=\frac{a[d(x,y)+\varphi (x)+\varphi
			(y)]+b[d(x,Tx)+\varphi (x)+\varphi (Tx)]+c[d(y,Ty)+\varphi (y)+\varphi (Ty)]%
		}{a+b+c}
	\end{equation*}%
	for all $x,y\in X$,$a,b,c\in \lbrack 0,\infty )$,$a+b+c>0$ \ and ,$(\psi
	,\phi ,F)$ is not monotone ,$\ \ \psi \in \Phi ,\phi \in \Phi _{u}$, $\ F\in 
	\mathcal{C}$ .
\end{definition}

\begin{theorem}
	\label{thm3.2 +2022+9+6+1} Let X be a complete $b$-rectangular metric space
	with parameter s $\geq 1$. If $T$ is a is called a convex $C$ class $-F-$%
	generalized weakly contractive mapping, then $T$ has a unique fixed point $%
	z\in X$ such that $z=Tz$ and $\varphi (z)=0$.
\end{theorem}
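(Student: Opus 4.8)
The plan is to follow the architecture of the proof of Theorem~\ref{thm3.2} essentially verbatim, replacing the maximum appearing in $M$ by the weighted convex combination and tracking how the three weights $a,b,c$ enter each estimate. Fix $x_0\in X$, set $x_{n+1}=Tx_n$, and dispose of the trivial case in which $x_{n_0}=x_{n_0+1}$ for some $n_0$. It is convenient to abbreviate $p_n=d(x_n,x_{n+1})+\varphi(x_n)+\varphi(x_{n+1})$ and $q_n=d(x_n,x_{n+2})+\varphi(x_n)+\varphi(x_{n+2})$.

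First I would show that $\{p_n\}$ is nonincreasing. Putting $x=x_{n-1}$, $y=x_n$ and using $d(x_{n-1},Tx_{n-1})=d(x_{n-1},x_n)$ and $d(x_n,Tx_n)=d(x_n,x_{n+1})$, the numerator of $M$ collapses so that $M(x_{n-1},x_n,d,T,\varphi)=\frac{(a+b)p_{n-1}+c\,p_n}{a+b+c}$. Since $s\ge 1$ and $\psi$ is nondecreasing, the left-hand side dominates $\psi(p_n)$, while the $C$-class axiom $F(u,v)\le u$ gives $\psi(p_n)\le\psi(M)$, whence $p_n\le M$ and therefore $(a+b)(p_n-p_{n-1})\le 0$. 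When $a+b>0$ this yields $p_n\le p_{n-1}$; the degenerate case $a=b=0$ (so $M=p_n$) is closed directly, since then $\psi(p_n)\le F(\psi(p_n),\phi(p_n))\le \psi(p_n)$ forces $p_n=0$. Thus $p_n\downarrow r\ge 0$, and passing to the limit (so that the convex combination $M\to r$) and invoking continuity of $\psi$, lower semicontinuity of $\phi$, and the $C$-class axiom forces $r=0$. This gives $d(x_n,x_{n+1})\to 0$ and $\varphi(x_n)\to 0$, the analogues of (\ref{3.8})--(\ref{3.9}).

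Next I would establish $d(x_n,x_{n+2})\to 0$. Putting $x=x_{n-1}$, $y=x_{n+1}$ writes $M$ as the convex combination $\frac{a\,q_{n-1}+b\,p_{n-1}+c\,p_{n+1}}{a+b+c}$, and the same monotonicity argument gives $q_n\le M$. Since the $p$-terms already vanish, taking $\limsup$ yields $L\le\frac{a}{a+b+c}L$ for $L=\limsup q_n$; as $b+c>0$ this forces $L=0$, while the case $b=c=0$ is again closed by the nonincreasing-plus-$C$-class argument. With both $d(x_n,x_{n+1})\to 0$ and $d(x_n,x_{n+2})\to 0$ in hand, I would run the Cauchy argument exactly as in Theorem~\ref{thm3.2}: assuming $\{x_n\}$ is not Cauchy, apply Lemma~\ref{2?4} to extract $\varepsilon>0$ and the index sequences, bound each of the three pieces of $M(x_{m(k)},x_{n(k)},d,T,\varphi)$ using (\ref{3.8}), (\ref{3.9}) and part (iii) so that $\limsup M\le s\varepsilon$, and then combine the contractive inequality with part (iv) to produce $\psi(s\varepsilon)\le F(\psi(s\varepsilon),\phi(s\varepsilon))\le\psi(s\varepsilon)$, a contradiction. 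Completeness then supplies $z$ with $x_n\to z$, and lower semicontinuity gives $\varphi(z)=0$; the identity $z=Tz$ follows by putting $x=x_n$, $y=z$ and using Lemma~\ref{2.3} as before.

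For uniqueness, suppose $z=Tz$, $u=Tu$ with $\varphi(z)=\varphi(u)=0$. Then the first bracket of $M$ equals $d(z,u)$ while the other two vanish, so $M(z,u,d,T,\varphi)=\frac{a}{a+b+c}\,d(z,u)\le d(z,u)$; the contractive inequality gives $\psi(s^2 d(z,u))\le\psi(M)$, and since $s^2\ge 1\ge\frac{a}{a+b+c}$ this is consistent with the $C$-class axiom only when $d(z,u)=0$. The obstacle here is bookkeeping rather than conceptual: one must check that each limiting step survives the replacement of $\max$ by a convex combination, and in particular isolate the degenerate weight configurations ($a+b=0$ in the first step, $b+c=0$ in the second) where the combination fails to \emph{see} the dominant term and the decreasing-sequence argument must be substituted. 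I would also flag that the hypothesis that $(\psi,\phi,F)$ be \emph{non}-monotone is never actually used to push the estimates through—only the $C$-class axioms, the monotonicity of $\psi$, and the (semi)continuity hypotheses are invoked—so I expect this clause to be vestigial or a misprint for ``monotone.''
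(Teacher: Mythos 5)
Your proposal is correct and follows the same architecture as the paper's own proof: the iterates $x_{n+1}=Tx_n$, the nonincreasing sequence $d(x_n,x_{n+1})+\varphi(x_n)+\varphi(x_{n+1})$, the step $d(x_n,x_{n+2})\to 0$, the Cauchy argument via Lemma~\ref{2?4}, the passage to the limit via Lemma~\ref{2.3}, and the same uniqueness computation. In fact your version is more careful than the published one, which after the first step silently reverts to the max-form $M$ of Theorem~\ref{thm3.2} in the periodic-point, Cauchy, and uniqueness steps (a copy-paste) rather than using the convex combination actually hypothesized; your explicit tracking of the weights $a,b,c$, your isolation of the degenerate cases $a+b=0$ and $b+c=0$, and your observation that the stated ``not monotone'' condition on $(\psi,\phi,F)$ is never invoked (and is presumably a misprint) all repair defects in the paper's argument rather than introduce gaps.
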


\begin{proof}
	Let $x_{0}\in X$ be an arbitrary point in $X $. Then we define the sequence $%
	\left\lbrace x_{n}\right\rbrace$ by $x_{n+1} =Tx_{n}$, for all $n\in \mathbb{%
		\ N}.$
	
	If there exists $n_0\in \mathbb{N}$ such that $x_{n_0}=x_{n_0+1} =0$, then $%
	x_{n_0} $ is a fixed point of $T $.
	
	Next, we assume that $x_{n}\neq x_{n+1}$.\newline
	We claim that 
	\begin{equation*}
		\lim_{n\rightarrow +\infty }d\left( x_{n,}x_{n+1}\right) =0
	\end{equation*}%
	and 
	\begin{equation*}
		\lim_{n\rightarrow +\infty }d\left( x_{n,}x_{n+2}\right) =0.
	\end{equation*}%
	Letting $x=x_{n-1}$ and $y=x_{n}$ in (\ref{3.1}) for all $n\in $ $\mathbb{N}$
	, we have 
	\begin{eqnarray}
		&&\psi \left( s^{2}d\left( Tx_{n-1},Tx_{n}\right) +\varphi \left(
		Tx_{n-1}\right) +\varphi \left( Tx_{n}\right) \right)  \\
		&&\qquad \leq F(\psi \left( M\left( x_{n-1},x_{n},d,T,\varphi \right)
		\right) ,\phi \left( M\left( x_{n-1},x_{n},d,T,\varphi \right) \right) ), 
		\notag
	\end{eqnarray}%
	where 
	\begin{multline}
		M\left( x_{n-1},x_{n},d,T,\varphi \right) =\frac{1}{a+b+c}%
		\{[ad(x_{n-1},x_{n})+\varphi (x_{n-1})+\varphi
		(x_{n})]+b[d(x_{n-1},x_{n})+\varphi (x_{n-1})]  \label{n1} \\
		+\varphi (x_{n})]+c[d(x_{n},Tx_{n})+\varphi (x_{n})+\varphi (Tx_{n})]\} 
		\notag \\
		=\frac{1}{a+b+c}\{(a+b)[d(x_{n-1},x_{n})+\varphi (x_{n-1})+\varphi
		(x_{n})]+c[d(x_{n},x_{n+1})+\varphi (x_{n})+\varphi (Tx_{n+1})]\}.  \notag
	\end{multline}%
	then we have 
	\begin{align*}
		\psi \left( d\left( Tx_{n-1},Tx_{n}\right) +\varphi \left( Tx_{n-1}\right)
		+\varphi \left( x_{n+1}\right) \right) & =\psi \left( d\left(
		x_{n},x_{n+1}\right) +\varphi \left( x_{n}\right) +\varphi \left(
		x_{n+1}\right) \right)  \\
		& \leq \psi \left( s^{2}d\left( x_{n},x_{n+1}\right) +\varphi \left(
		x_{n}\right) +\varphi \left( x_{n+1}\right) \right)  \\
		& \leq F(\psi \left( 
		\begin{array}{c}
			\frac{1}{a+b+c}\{(a+b)[d(x_{n-1},x_{n})+\varphi (x_{n-1})+\varphi (x_{n})]
			\\ 
			+c[d(x_{n},x_{n+1})+\varphi (x_{n})+\varphi (Tx_{n+1})]\}%
		\end{array}%
		\right)  \\
		& ,\phi \left( 
		\begin{array}{c}
			\frac{1}{a+b+c}\{(a+b)[d(x_{n-1},x_{n})+\varphi (x_{n-1})+\varphi (x_{n})]
			\\ 
			+c[d(x_{n},x_{n+1})+\varphi (x_{n})+\varphi (Tx_{n+1})]\}%
		\end{array}%
		\right) ) \\
		& \leq \psi \left( 
		\begin{array}{c}
			\frac{1}{a+b+c}\{(a+b)[d(x_{n-1},x_{n})+\varphi (x_{n-1})+\varphi (x_{n})]
			\\ 
			+c[d(x_{n},x_{n+1})+\varphi (x_{n})+\varphi (Tx_{n+1})]\}%
		\end{array}%
		\right) ,
	\end{align*}%
	which implies 
	\begin{equation*}
		\phi \left( d\left( x_{n},x_{n+1}\right) +\varphi \left( x_{n}\right)
		+\varphi \left( x_{n+1}\right) \right) \leq \left( 
		\begin{array}{c}
			\frac{1}{a+b+c}\{(a+b)[d(x_{n-1},x_{n})+\varphi (x_{n-1})+\varphi (x_{n})]
			\\ 
			+c[d(x_{n},x_{n+1})+\varphi (x_{n})+\varphi (Tx_{n+1})]\}%
		\end{array}%
		\right) 
	\end{equation*}%
	and so 
	\begin{equation*}
		d\left( x_{n},x_{n+1}\right) +\varphi \left( x_{n}\right) +\varphi \left(
		x_{n+1}\right) \leq d(x_{n-1},x_{n})+\varphi (x_{n-1})+\varphi (x_{n})
	\end{equation*}%
	Hence  the sequence $\{d\left( x_{n},x_{n+1}\right) +\varphi \left(
	x_{n}\right) +\varphi \left( x_{n+1}\right) \}_{n\in \mathbb{N}}$ is
	nonincreasing.\newline
	Hence $d\left( x_{n},x_{n+1}\right) +\varphi \left( x_{n}\right) +\varphi
	\left( x_{n+1}\right) \rightarrow r$ as $n\rightarrow +\infty $ for some $%
	r\geq 0$. Assume $r>0$ and letting $n\rightarrow +\infty $ in (\ref{n1}) and
	using the continuity of $\psi $ and the lower semicontinuity of $\phi $, we
	have 
	\begin{align*}
		\psi \left( r\right) & \leq \psi \left( s^{2}r\right) \leq F(\psi \left(
		r\right) ,\liminf_{n\rightarrow \infty }\phi \left( d\left(
		x_{n},x_{n+1}\right) +\varphi \left( x_{n}\right) +\varphi \left(
		x_{n+1}\right) \right) ) \\
		& \leq F(\psi \left( r\right) ,\phi \left( r\right) ).
	\end{align*}%
	It follows that $\psi \left( r\right) =0,$ $\ $or$\ \ \ ,\phi \left(
	r\right) =0$, hence we have $r=0$ and consequently, $\lim_{n\rightarrow
		+\infty }d\left( x_{n},x_{n+1}\right) +\varphi \left( x_{n}\right) +\varphi
	\left( x_{n+1}\right) =0.$ So 
	\begin{equation}
		\lim_{n\rightarrow +\infty }d\left( x_{n},x_{n+1}\right) =0,
	\end{equation}%
	\begin{equation}
		\lim_{n\rightarrow +\infty }\varphi \left( x_{n}\right) =\lim_{n\rightarrow
			+\infty }\varphi \left( x_{n+1}\right) =0.
	\end{equation}
	
	Now, we shall prove that $T $ has a periodic point. Suppose that it is not
	the case. Then $x_{n}\neq x_{m}$ for all $n,m\in $ $\mathbb{N},\ n\neq m.$
	
	In (\ref{3.1}), letting $x=x_{n-1}$ and $y=x_{n+1}$, we have 
	\begin{eqnarray*}
		&&\psi \left( s^{2}d\left( Tx_{n-1},Tx_{n+1}\right) +\varphi \left(
		Tx_{n-1}\right) +\varphi \left( Tx_{n+1}\right) \right)  \\
		&&\qquad \leq F(\psi \left( M\left( x_{n-1},x_{n+1},d,T,\varphi \right)
		\right) ,\phi \left( M\left( x_{n-1},x_{n+1},d,T,\varphi \right) \right) ),
	\end{eqnarray*}%
	where 
	\begin{eqnarray*}
		M\left( x_{n-1},x_{n+1},d,T,\varphi \right)  &=&\frac{1}{a+b+c}%
		\{a[d(x_{n-1},x_{n+1})+\varphi (x_{n-1})+\varphi (x_{n+1})]+ \\
		&&b[d(x_{n-1},x_{n})+\varphi (x_{n-1})+\varphi
		(x_{n})]+c[d(x_{n+1},x_{n+2})+\varphi (x_{n+1})+\varphi (x_{n+2})]\} \\
		&=&\frac{1}{a+b+c}\{a[d(x_{n-1},x_{n+1})+\varphi (x_{n-1})+\varphi
		(x_{n+1})],d(x_{n-1},x_{n})+\varphi (x_{n-1})+\varphi (x_{n})\}.
	\end{eqnarray*}%
	So we get 
	\begin{eqnarray}
		&&\psi \left( d\left( x_{n},x_{n+2}\right) +\varphi \left( x_{n}\right)
		+\varphi \left( x_{n+2}\right) \right) \leq \psi \left( s^{2}d\left(
		x_{n},x_{n+2}\right) +\varphi \left( x_{n}\right) +\varphi \left(
		x_{n+2}\right) \right)   \label{3.11} \\
		&\leq &F(\psi \left( \max \{d(x_{n-1},x_{n+1})+\varphi (x_{n-1})+\varphi
		(x_{n+1}),d(x_{n-1},x_{n})+\varphi (x_{n-1})+\varphi (x_{n})\}\right)  
		\notag \\
		&&,\phi \left( \max \{d(x_{n-1},x_{n+1})+\varphi (x_{n-1})+\varphi
		(x_{n+1}),d(x_{n-1},x_{n})+\varphi (x_{n-1})+\varphi (x_{n})\}\right) ). 
		\notag
	\end{eqnarray}%
	Take $a_{n}=d\left( x_{n},x_{n+2}\right) +\varphi \left( x_{n}\right)
	+\varphi \left( x_{n+2}\right) $ and $b_{n}=d(x_{n},x_{n+1})+\varphi
	(x_{n})+\varphi (x_{n+1}).$\newline
	Then by (\ref{3.11}), one can write 
	\begin{align*}
		\psi \left( a_{n}\right) & \leq F(\psi \left( \max \left(
		a_{n-1},b_{n-1}\right) \right) ,\phi \left( \max \left(
		a_{n-1},b_{n-1}\right) \right) ) \\
		& \leq \psi \left( \max \left( a_{n-1},b_{n-1}\right) \right) .
	\end{align*}%
	Since $\psi $ is increasing, we get 
	\begin{equation*}
		a_{n}\leq \max \left\{ a_{n-1},b_{n-1}\right\} . \\
	\end{equation*}%
	By (\ref{3.5}), we have 
	\begin{equation*}
		b_{n}\leq b_{n-1}\leq \max \left\{ a_{n-1},b_{n-1}\right\} ,
	\end{equation*}%
	which implies that 
	\begin{equation*}
		\max \left\{ a_{n},b_{n}\right\} \leq \max \left\{ a_{n-1},b_{n-1}\right\} ,%
		\text{ }\forall n\in \mathbb{N}. \\
	\end{equation*}%
	Therefore, the sequence $\max \left\{ a_{n-1},b_{n-1}\right\} _{n\in \mathbb{%
			\ N}}$ is a nonnegative decreasing sequence of real numbers. Thus there
	exists $\lambda \geq 0$ such that 
	\begin{equation*}
		\lim_{n\rightarrow +\infty }\max \left\{ a_{n},b_{n}\right\} =\lambda .
	\end{equation*}%
	Assume that $\lambda >0$. By (\ref{3.8}), it is obvious that 
	\begin{equation}
		\lambda =\lim_{n\rightarrow +\infty }\sup a_{n}=\lim_{n\rightarrow +\infty
		}\sup \max \left\{ a_{n},b_{n}\right\} =\lim_{n\rightarrow +\infty }\max
		\left\{ a_{n},b_{n}\right\} .  \label{3.12}
	\end{equation}%
	Taking $\limsup_{n}\rightarrow +\infty $ in (\ref{3.11}), using (\ref{3.12})
	and using the properties of $\psi $ and $\phi $, we obtain 
	\begin{align*}
		\psi (\lambda )& =\psi \left( \limsup_{n\rightarrow +\infty }a_{n}\right)  \\
		& =\limsup_{n\rightarrow +\infty }\psi \left( a_{n}\right)  \\
		& \leq \limsup_{n\rightarrow +\infty }\psi \left( \max \left\{
		a_{n},b_{n}\right\} \right) -\liminf_{n\rightarrow +\infty }\phi \left( \max
		\left\{ a_{n},b_{n}\right\} \right)  \\
		& \leq F(\psi \left( \lim_{n\rightarrow +\infty }\max \left\{
		a_{n},b_{n}\right\} \right) ,\phi \left( \lim_{n\rightarrow +\infty }\max
		\left\{ a_{n},b_{n}\right\} \right) ) \\
		& =F(\psi (\lambda ),\phi (\lambda )),
	\end{align*}%
	which implies that $\psi \left( r\right) =0,$ $\ $or$\ \ \ ,\phi \left(
	r\right) =0$, a contradiction. Thus, from (\ref{3.12}), 
	\begin{equation*}
		\limsup_{n\rightarrow +\infty }a_{n}=0
	\end{equation*}%
	and hence 
	\begin{equation*}
		\lim_{n\rightarrow +\infty }d\left( x_{n,}x_{n+2}\right) =0.
	\end{equation*}
	
	Next, we shall prove that $\left\lbrace x_{n}\right\rbrace _{n\in \mathbb{N}%
	} $ is a Cauchy sequence, i.e, $\lim_{n,m\rightarrow +\infty }d\left(
	x_{n,}x_{m}\right) =0$ for all $n,m\in \mathbb{N}$. Suppose to the contrary.
	By Lemma $\ref{2?4}$, there is an $\varepsilon $ $>0$ such that for an
	integer $k $ there exist two sequences $\left\lbrace n_{\left( k\right)
	}\right\rbrace $ and $\left\lbrace m_{\left( k\right) }\right\rbrace $ such
	that
	
	\begin{itemize}
		\item[i)] $\varepsilon \leq \lim_{k\rightarrow +\infty }\inf d\left(
		x_{m_{\left( k\right) }},x_{n_{\left( k\right)}}\right) \leq
		\lim_{k\rightarrow +\infty }\sup d\left( x_{m_{\left( k\right)
		}},x_{n_{\left( k\right)}}\right)\leq s\varepsilon ,$
		
		\item[ii)] $\varepsilon \leq \lim_{k\rightarrow +\infty }\inf d\left(
		x_{n_{\left( k\right) }},x_{m_{\left( k\right)}+1}\right) \leq
		\lim_{k\rightarrow +\infty }\sup d\left( x_{n_{\left( k\right)
		}},x_{m_{\left( k\right)}+1}\right)\leq s\varepsilon ,$
		
		\item[iii)] $\varepsilon \leq \lim_{k\rightarrow +\infty }\inf d\left(
		x_{m_{\left( k\right) }},x_{n_{\left( k\right)}+1}\right) \leq
		\lim_{k\rightarrow +\infty }\sup d\left( x_{m_{\left( k\right)
		}},x_{n_{\left( k\right)}+1}\right)\leq s\varepsilon ,$
		
		\item[vi)] $\frac{\varepsilon}{s} \leq \lim_{k\rightarrow +\infty }\inf
		d\left( x_{m_{\left( k\right) +1}},x_{n_{\left( k\right)}+1}\right) \leq
		\lim_{k\rightarrow +\infty }\sup d\left( x_{m_{\left(
				k\right)}+1},x_{n_{\left( k\right)}+1}\right)\leq s^{2}\varepsilon .$
	\end{itemize}
	
	From (\ref{3.1}) and by setting $x=x_{m_{\left( k\right) }}$ and $%
	y=x_{n_{\left( k\right) }}$, we have 
	\begin{eqnarray*}
		&&M(x_{m_{(k)}},x_{n_{(k)}},d,T,\varphi )=\max
		\{d(x_{m_{(k)}},x_{n_{(k)}})+\varphi (x_{m_{(k)}})+\varphi (x_{m_{(k)}}), \\
		&&\quad d(x_{m_{(k)}},x_{m_{(k)}+1})+\varphi (x_{m_{(k)}})+\varphi
		(x_{m_{(k)}+1}),d(x_{n_{(k)}},x_{n_{(k)}+1})+\varphi (x_{n_{(k)}})+\varphi
		(x_{n_{(k)}+1})\}.
	\end{eqnarray*}%
	Taking the limit as $k\rightarrow +\infty $ and using (\ref{3.8}), (\ref{3.9}
	) and $(iii)$ of Lemma \ref{2?4}, we have 
	\begin{equation}
		\lim_{k\rightarrow +\infty }M\left( x_{m_{\left( k\right) }},x_{n_{\left(
				k\right) }},d,T,\varphi \right) \leq s\varepsilon .  \label{3.15}
	\end{equation}%
	Now letting $x=x_{m_{\left( k\right) }}$ and $y=x_{n_{\left( k\right) }}$ in
	(\ref{3.1}), we have 
	\begin{multline*}
		\psi \left[ s^{2}d\left( x_{m_{\left( k\right) }+1},x_{n_{\left( k\right)
			}+1}\right) +\varphi \left( m_{\left( k\right) }+1\right) +\varphi \left(
		n_{\left( k\right) }+1\right) \right] \\
		\leq F(\psi \left[ d\left( x_{m_{\left( k\right) }+1},x_{n_{\left( k\right)
			}+1}\right) +\varphi \left( m_{\left( k\right) }+1\right) +\varphi \left(
		n_{\left( k\right) }+1\right) \right] \\
		,\phi \left[ d\left( x_{m_{\left( k\right) }},x_{n_{\left( k\right)
			}+1}\right) +\varphi \left( m_{\left( k\right) }\right) +\varphi \left(
		n_{\left( k\right) }\right) \right] ).
	\end{multline*}%
	Letting $k\rightarrow +\infty $, using (\ref{3.8}), (\ref{3.9}), (\ref{3.15}
	), and applying the continuity of $\psi $ and the lower semicontinuity of $%
	\phi $, we have 
	\begin{equation*}
		\lim_{k\rightarrow +\infty }\psi \left[ s^{2}d\left( x_{m_{\left( k\right)
			}+1},x_{n_{\left( k\right) }+1}\right) \right] \leq F(\psi (s\varepsilon
		),\phi (s\varepsilon )).
	\end{equation*}%
	Using (\ref{3.15}) and $(iv)$ of Lemma $\ref{2?4}$, we obtain 
	\begin{equation*}
		\psi (s\varepsilon )=\psi \left( s^{2}\frac{\varepsilon }{s}\right) \leq
		\lim_{k\rightarrow +\infty }\sup \psi \left[ s^{2}d\left( x_{m_{\left(
				k\right) }+1},x_{n_{\left( k\right) }+1}\right) \right] \leq F(\psi
		(s\varepsilon ),\phi (s\varepsilon )).
	\end{equation*}%
	This is a contradiction. Thus 
	\begin{equation*}
		\lim_{n,m\rightarrow +\infty }d\left( x_{m},x_{n}\right) =0.
	\end{equation*}%
	Hence $\left\{ x_{n}\right\} $ is a Cauchy sequence in $X$. By completeness
	of $\left( X,d\right) $, there exists $z\in X$ such that 
	\begin{equation*}
		\lim_{n\rightarrow +\infty }d\left( x_{n},z\right) =0.
	\end{equation*}%
	Since $\varphi $ is lower semicontinuous, we get 
	\begin{equation*}
		\varphi \left( z\right) \leq \liminf_{n\rightarrow +\infty }\varphi \left(
		x_{n}\right) \leq \lim_{n\rightarrow +\infty }\varphi \left( x_{n}\right) =0,
		\\
	\end{equation*}%
	which implies 
	\begin{eqnarray}
		\varphi \left( z\right) &=&0.  \label{3.17} \\
		&&  \notag
	\end{eqnarray}%
	Now, putting $x=x_{n}$ and $y=z$ in (\ref{3.1}), we have 
	\begin{eqnarray*}
		M\left( x_{n},z,d,T,\varphi \right) &=&\max \{d\left( x_{n},z\right)
		+\varphi \left( x_{n}\right) +\varphi \left( z\right) , \\
		&&d\left( x_{n},x_{n+1}\right) +\varphi \left( x_{n}\right) +\varphi \left(
		x_{n+1}\right) ,d\left( z,Tz\right) +\varphi \left( z\right) +\varphi \left(
		Tz\right) \}.
	\end{eqnarray*}%
	Taking the limit as $n\rightarrow +\infty $ and using (\ref{3.8}), (\ref{3.9}
	) and (\ref{3.17}), we have 
	\begin{equation*}
		\lim_{n\rightarrow +\infty }M\left( x_{n},z,d,T,\varphi \right) =d\left(
		z,Tz\right) +\varphi \left( Tz\right) .
	\end{equation*}%
	Since $x_{n}\rightarrow z$ as $n\rightarrow +\infty $, from Lemma \ref{2.3}
	, we conclude that 
	\begin{equation*}
		\frac{1}{s}d\left( z,Tz\right) \leq \lim_{n\rightarrow +\infty }\sup d\left(
		Tx_{n},Tz\right) \leq sd\left( z,Tz\right) .
	\end{equation*}%
	Hence 
	\begin{equation*}
		sd\left( z,Tz\right) =s^{2}\frac{1}{s}d\left( z,Tz\right) \leq
		\lim_{n\rightarrow +\infty }\sup s^{2}d\left( Tx_{n},Tz\right) ,
	\end{equation*}%
	which implies 
	\begin{equation*}
		\lim_{n\rightarrow +\infty }\sup \left[ sd\left( z,Tz\right) +\varphi \left(
		x_{n+1}\right) +\varphi \left( Tz\right) \right] \leq \lim_{n\rightarrow
			+\infty }\sup \left[ s^{2}d\left( Tx_{n},Tz\right) +\left( x_{n+1}\right)
		+\varphi \left( Tz\right) \right] .
	\end{equation*}%
	Then using (\ref{3.1}), we have 
	\begin{align*}
		\psi \left[ s^{2}d\left( Tx_{n},Tz\right) +\varphi \left( Tx_{n}\right)
		+\varphi \left( Tz\right) \right] & =\psi \left[ s^{2}d\left(
		x_{n+1},Tz\right) +\varphi \left( x_{n+1}\right) +\varphi \left( Tz\right) %
		\right] \\
		& \leq F(\psi \left[ M\left( x_{n},z,d,T,\varphi \right) \right] ,\phi \left[
		M\left( x_{n},z,d,T,\varphi \right) \right] ).
	\end{align*}%
	Letting $n\rightarrow +\infty $ and using the continuity of $\psi $ and the
	lower semicontinuity of $\phi $, we have 
	\begin{multline*}
		\psi \left[ \lim_{n\rightarrow +\infty }\sup \left( sd\left( z,Tz\right)
		+\varphi \left( x_{n+1}\right) +\varphi \left( Tz\right) \right) \right] \\
		\leq \psi \left[ \lim_{n\rightarrow +\infty }\sup \left( s^{2}d\left(
		Tx_{n},Tz\right) +\left( x_{n+1}\right) +\varphi \left( Tz\right) \right) %
		\right] \\
		\leq F(\psi \left[ \lim_{n\rightarrow +\infty }\sup M\left(
		x_{n},z,d,T,\varphi \right) \right] ,\lim_{n\rightarrow +\infty }\phi \left[
		M\left( x_{n},z,d,T,\varphi \right) \right] ),
	\end{multline*}%
	which implies 
	\begin{equation*}
		\psi \left[ sd\left( z,Tz\right) +\varphi \left( Tz\right) \right] \leq
		F(\psi \left[ d\left( z,Tz\right) +\varphi \left( Tz\right) \right] ,\phi %
		\left[ d\left( z,Tz\right) +\varphi \left( Tz\right) \right] ).
	\end{equation*}%
	This holds if and only if \ $\psi \left( d\left( z,Tz\right) +\varphi \left(
	Tz\right) \right) =0$\ \ or $\phi \left( d\left( z,Tz\right) +\varphi \left(
	Tz\right) \right) =0$ and from the property of $\psi ,\phi $, we have 
	\begin{equation*}
		d\left( z,Tz\right) +\varphi \left( Tz\right) =0.
	\end{equation*}%
	Hence $d\left( z,Tz\right) =0$ and so $z=Tz$ and $\varphi \left( Tz\right)
	=0 $. It is a contradiction to the assumption: that $T$ does not have a
	periodic point. Thus $T$ has a periodic point, say, $z$ of period $n$.
	Suppose that the set of fixed points of $T$ is empty. Then we have 
	\begin{equation*}
		q>0\ and\ d(z,Tz)>0.
	\end{equation*}%
	Since $T$ has a periodic point, $z=T^{n}z$. Letting $x=T^{n-1}z$ and $%
	y=T^{n}z$, we obtain 
	\begin{multline*}
		M\left( T^{n}z,T^{n-1}z,d,T,\varphi \right) =\max \{d\left(
		T^{n-1}z,T^{n}z\right) +\varphi \left( T^{n-1}z\right) +\varphi \left(
		T^{n}z\right) , \\
		d\left( T^{n-1}z,T^{n}z\right) +\varphi \left( T^{n-1}z\right) +\varphi
		\left( T^{n}z\right) ,d\left( T^{n}z,TT^{n}z\right) +\varphi \left(
		T^{n}z\right) +\varphi \left( TT^{n}z\right) \}.
	\end{multline*}
	
	By a similar method to (\ref{3.6}), we conclude that 
	\begin{equation*}
		M\left( T^{n}z,T^{n-1}z,d,T,\varphi \right) =d\left( T^{n-1}z,T^{n}z\right)
		+\varphi \left( T^{n-1}z\right) +\varphi \left( T^{n}z\right) .
	\end{equation*}
	From (\ref{3.1}), we have 
	\begin{align*}
		\psi \left[ s^{2}d\left( z,Tz\right) +\varphi \left( T^{n}z\right) +\varphi
		\left( T^{n+1}z\right) \right] & =\psi \left[ s^{2}d\left(
		T^{n}z,T^{n+1}z\right) +\varphi \left( T^{n}z\right) +\varphi \left(
		T^{n+1}z\right) \right] \\
		& \leq F(\psi \left[ d\left( T^{n-1}z,T^{n}z\right) +\varphi \left(
		T^{n-1}z\right) +\varphi \left( T^{n}z\right) \right] \\
		& ,\phi \left[ d\left( T^{n-1}z,T^{n}z\right) +\varphi \left(
		T^{n-1}z\right) +\varphi \left( T^{n}z\right) \right] ) \\
		& \leq \psi \left[ s^{2}d\left( T^{n-1}z,T^{n}z\right) +\varphi \left(
		T^{n-1}z\right) +\varphi \left( T^{n}z\right) \right] \\
		& \vdots \\
		& \leq F(\psi \left[ d\left( z,Tz\right) +\varphi \left( z\right) +\varphi
		\left( Tz\right) \right] \\
		& ,\phi \left[ d\left( z,Tz\right) +\varphi \left( z\right) +\varphi \left(
		Tz\right) \right] )
	\end{align*}
	Taking the limit as $n\rightarrow +\infty $ and applying the continuity of $%
	\psi $ and the lower semicontinuity of $\phi $, we have 
	\begin{equation*}
		\psi \left[ s^{2}d\left( z,Tz\right) \right] \leq F(\psi \left[ d\left(
		z,Tz\right) \right] ,\phi \left[ d\left( z,Tz\right) \right] ).
	\end{equation*}
	Hence $d(z,Tz)=0$, which is a contradiction. Thus the set of fixed points of
	T is non-empty, that is, $T$ has at least one fixed point.
	
	Suppose that $z,u\in X$ are two fixed points of $T$ such that $u\neq z$.
	Then $Tz=z $ and $Tu=u $.
	
	Letting $x=z$ and $y=u$ in (\ref{3.1}), we have 
	\begin{equation*}
		\psi \left( s^{2}d\left( Tz,Tu\right) +\varphi \left( Tz\right) +\varphi
		\left( Tu\right) \right) =\psi \left( s^{2}d\left( z,u\right) \right) \leq
		F(\psi \left( M\left( z,u,d,T,\varphi \right) \right) ,\phi \left( M\left(
		z,u,d,T,\varphi \right) \right) ),
	\end{equation*}%
	where 
	\begin{align*}
		M(z,u,d,T,\varphi )& =\max \{d(z,u)+\varphi (z)+\varphi (u),d(z,Tz)+\varphi
		(z)+\varphi (Tz),d(u,Tu)+\varphi (u)+\varphi (Tu)\} \\
		& =d(z,u).
	\end{align*}%
	So 
	\begin{equation*}
		\psi \left( s^{2}d\left( z,u\right) \right) \leq F(\psi \left( d\left(
		z,u\right) \right) ,\phi \left( d\left( z,u\right) \right) ).
	\end{equation*}%
	This holds if $\phi \left( d\left( z,u\right) \right) =0$ and so we have $%
	(d\left( z,u\right) =0$. Hence $z=u$ and $T$ has a unique fixed point.
\end{proof}

\section{Conclusion}

In this paper, inspired by the concept of generalized weakly contractive
mappings in metric spaces, we introduced the concept of C-class function for
generalized weakly contractive mappings in rectangular $b$-metric spaces to
study the existence of fixed point for the mappings in this spaces.
Furthermore, we provided some useful examples.

\medskip

\section*{Declarations}

\medskip

\noindent \textbf{Availablity of data and materials}\newline
\noindent Not applicable.

\medskip

\noindent \textbf{Competing interests}\newline
\noindent The authors declare that they have no competing interests.

\medskip

\noindent \textbf{Funding} \newline
\noindent Not applicable.

\medskip

\noindent \textbf{Authors' contributions}\newline
\noindent The authors equally conceived of the study, participated in its
design and coordination, drafted the manuscript, participated in the
sequence alignment, and read and approved the final manuscript.

\medskip

\bibliographystyle{amsplain}

\end{document}